\documentclass[reqno]{amsart}
\usepackage{bm}
\usepackage{cite}
\usepackage{color}
\usepackage{hyperref}

\textwidth 145mm 
\textheight 220mm 
\oddsidemargin 7mm
\evensidemargin 7mm 
\headsep 4mm 
\footskip 11mm

\newtheorem{thm}{Theorem}
\newtheorem{lem}{Lemma}

\newtheorem{coro}{Corollary}
\newtheorem{defi}{Definition}
\newtheorem{rem}{Remark}
\newtheorem{prob}{Problem}

\def\BC{\mathbb C}
\def\BN{\mathbb N}
\def\BR{\mathbb R}
\def\cA{\mathcal A}
\def\cD{\mathcal D}
\def\cP{\mathcal P}
\def\cQ{\mathcal Q}
\def\cS{\mathcal S}
\def\Ga{\Gamma}
\def\Si{\Sigma}
\def\Om{\Omega}
\def\al{\alpha}
\def\be{\beta}
\def\ga{\gamma}
\def\de{\delta}

\def\ve{\varepsilon}
\def\ze{\zeta}
\def\te{\theta}
\def\ka{\kappa}
\def\la{\lambda}
\def\vp{\varphi}

\def\f{\frac}
\def\nb{\nabla}
\def\ov{\overline}
\def\un{\underline}
\def\pa{\partial}
\def\wh{\widehat}
\def\wt{\widetilde}
\def\T{\mathrm T}
\def\rd{\mathrm d}
\def\diag{\mathrm{diag}}
\def\e{\mathrm e}
\def\ri{\mathrm i}
\def\rRe{\mathrm{Re}}
\def\rIm{\mathrm{Im}}

\numberwithin{equation}{section}
\allowdisplaybreaks[4]

\begin{document}
\title[Coupled time-fractional diffusion systems]{Initial-boundary value problems for coupled systems of time-fractional diffusion equations}

\author[Z. Li]{Zhiyuan LI}
\author[X. Huang]{Xinchi Huang}
\author[Y. Liu]{Yikan LIU}

\address{Zhiyuan Li\newline
School of Mathematics and Statistics, Ningbo University, 818 Fenghua Road, Ningbo, Zhejiang 315211, China.}
\email{lizhiyuan@nbu.edu.cn}
\address{Xinchi Huang \newline
Graduate School of Mathematical Sciences, the University of Tokyo, 3-8-1 Komaba, Meguro-ku, Tokyo 153-8914, Japan.}
\email{huangxc@ms.u-tokyo.ac.jp}
\address{Yikan Liu\newline
Research Center of Mathematics for Social Creativity, Research Institute for Electronic Science, Hokkaido University, N12W7, Kita-Ward, Sapporo 060-0812, Japan.}
\email{ykliu@es.hokudai.ac.jp}

\subjclass[2010]{35K20, 35R30, 35B53}
\keywords{Time-fractional diffusion equation;  coupled system; asymptotic behavior; inverse problem}

\begin{abstract}
This article deals with the initial-boundary value problem for a moderately coupled system of time-fractional diffusion equations. Defining the mild solution, we establish fundamental unique existence, limited smoothing property and long-time asymptotic behavior of the solution, which mostly inherit those of a single equation. Owing to the coupling effect, we also obtain the uniqueness for an inverse problem on determining all the fractional orders by the single point observation of a single component of the solution.
\end{abstract}

\maketitle

\section{Introduction}\label{sect-intro}

For anomalous diffusion in heterogeneous media and viscoelastic body that conventional partial differential equations (PDEs) fail to describe, a considerable number of nonlocal models with fractional derivatives have been proposed. Especially, due to the capability of representing the time memory effect, time-fractional PDEs such as
\begin{equation}\label{eq-TFPDE}
(\pa_t^\al-\triangle)u=F
\end{equation}
with a Caputo derivative $\pa_t^\al$ of order $0<\al<2$ in time (which will be defined soon) have gathered popularity among mathematicians and multidisciplinary researchers. The past decade has witnessed a tremendous development in mathematical analysis related to \eqref{eq-TFPDE} and its various generalizations: fundamental theories such as the unique existence of solutions have been established (e.g.\! \cite{EK04,GLY15,J21,KRY20,SY11}), and corresponding numerical and inverse problems have also been investigated extensively 
(e.g.\! \cite{JLLZ15,JLZ13,LiLiuY19,LY19,LiuLiY19}). In particular, for time-fractional diffusion equations with $\al<1$ in \eqref{eq-TFPDE}, a linear theory equivalent to their parabolic prototypes has been well constructed, and their similarity and difference have been clarified.

Among the rapidly increasing literature on fractional-related topics, however, it turns out that almost all existing researches are restricted to single and linear time-fractional PDEs. As we know, for important applications in chemistry, biology and finance, coupled systems of PDEs (represented by reaction-diffusion systems) successfully model the interaction and evolution of two or more involved components under consideration. Therefore, in the context of systems with more sophisticated mechanism due to a richer structure associated to the corresponding couplings, it is desirable to create new models based on coupled systems of time-fractional PDEs or even general nonlocal reaction-diffusion systems (e.g. \cite{EI21,EIKMT21}). On the other hand, now that a linear theory for single time-fractional diffusion equations is accomplished, it seems natural to study its generalization to linear systems first and investigate them from both theoretical and practical aspects.

Keeping the above backgrounds in mind, let us set up the formulation in this article.

Let $T\in\BR_+:=(0,+\infty)$ be a constant and $\Om\subset\BR^d$ ($d\in\BN:=\{1,2,\dots\}$) be an open bounded domain with a smooth boundary $\pa\Om$ (for example, of $C^2$ class). For a constant $K\in\BN$, let $\al_1,\dots,\al_K$ be constants satisfying $1>\al_1\ge\cdots\ge\al_K>0$. 
In this article, we consider the following initial-boundary value problem for a coupled system of time-fractional diffusion equations
\begin{equation}\label{eq-ibvp-u0}
\left\{\begin{alignedat}{2}
& \begin{aligned}
\pa_t^{\al_k}u_k & =\mathrm{div}(\bm A_k(\bm x)\nb u_k)+\sum_{\ell=1}^K\bm b_{k\ell}(\bm x,t)\cdot\nb u_\ell\\
& \quad\,+\sum_{\ell=1}^K c_{k\ell}(\bm x,t)u_\ell+F_k(\bm x,t)
\end{aligned}
& \quad & \mbox{in }\Om\times(0,T),\\
& u_k=u_0^{(k)} & \quad & \mbox{in }\Om\times\{0\},\\
& u_k=0 & \quad & \mbox{on }\pa\Om\times(0,T),
\end{alignedat}\right.\quad k=1,\dots,K,
\end{equation}
where by $\pa_t^\al$ ($0<\al<1$) we denote the Caputo derivative (e.g.\! Podlubny \cite{P99}) which is usually defined as
\[
\pa_t^\al f(t):=\f1{\Ga(1-\al)}\int_0^t\f{f'(\tau)}{(t-\tau)^\al}\,\rd\tau,\quad t>0,\ f\in C^1[0,+\infty).
\]
There are several ways to define the Caputo derivative and the domain could be extended from $C^1$ to some fractional Sobolev space (e.g.\! \cite{GLY15}). Since our main concentration in this article is the coupled system, we do not discuss further on the definition of the fractional derivative. 
Here $\bm A_k=(a_{i j}^{(k)})_{1\le i,j\le d}\in C^1(\ov\Om;\BR_{\mathrm{sym}}^{d\times d})$ ($k=1,\dots,K$) are symmetric and strictly positive-definite matrices on $\ov\Om$\,. 
More precisely, there exists a constant $\ka>0$ such that
\[
\bm A_k(\bm x)=(\bm A_k(\bm x))^\T,\quad\bm A_k(\bm x)\bm\xi\cdot\bm\xi\ge\ka|\bm\xi|^2,\quad\forall\,\bm\xi\in\BR^d,\ \forall\,\bm x\in\ov\Om,\ \forall\,k=1,\dots,K,
\]
where $(\,\cdot\,)^\T$ denotes the transpose and $|\bm\xi|^2:=\bm\xi\cdot\bm\xi$. Later in Section \ref{sec-premain} we will provide further details of the assumptions on involved initial values, source terms and coefficients.

The governing equation in \eqref{eq-ibvp-u0} is a moderately coupled system of $K$ linear time-fractional diffusion equations, where the components are allowed to interact with each other up to the first derivative in space. The orders $\al_k$ of time derivatives for all components can be different from each other, and the coefficients of zeroth and first-order spatial derivatives can depend on both $\bm x$ and $t$. Nevertheless, the coefficients of principal parts are restricted to be $t$-independent, since later we shall rely on the existence of eigensystems of elliptic operators $\cA_k\psi=-\mathrm{div}(\bm A_k(\bm x)\nb\psi)$. In this sense, the homogeneous Dirichlet boundary condition in \eqref{eq-ibvp-u0} is not obligatory and can be replaced by homogeneous Neumann or Robin ones. Therefore, the formulation \eqref{eq-ibvp-u0} covers a rather wide range of problems, which seems not yet studied in literature to the best of our knowledge.

Indeed, it reveals that existing publications, though limited, emphasize more on the nonlinear counterpart of \eqref{eq-ibvp-u0}, whereas the elliptic parts can be reasonably simple. Recently, Gal and Warma \cite[Chapter 4]{GW20} considered systems of fractional kinetic equations with nonlinear terms taking the form of $F_k(\bm x,t,u_1,\dots,u_K)$ and discussed the existence of maximal mild and strong solutions. Very recently, Suzuki \cite{S21,S22} investigated local existence and non-existence for weakly coupled time-fractional reaction-diffusion systems with moderately and rapidly growing nonlinearities. In view of applications (e.g.\! \cite{LHW07,LHYJH20}), definitely one should consider nonlinear reaction-diffusion systems based on \eqref{eq-ibvp-u0} and develop corresponding theories. However, as was seen in the research of traditional reaction-diffusion systems, linear coupled systems play fundamental roles especially in the discussions of super/subsolution methods and traveling waves. Hence, as the starting point, the major target of this article is to construct the basic well-posedness results for \eqref{eq-ibvp-u0} parallel to the case of a single equation.

On the other hand, we are also interested in inverse problems associated with \eqref{eq-ibvp-u0}. Again in the framework of traditional parabolic systems, sometimes it is possible to identify multiple coefficients simultaneously from observation data of a single component due to the coupling effect. Thus, the natural curiosity is whether and to which extent such property is inherited by the fractional systems. On this direction, only Ren, Huang and Yamamoto \cite{RHY21} obtained the conditional stability for a coefficient inverse problem of \eqref{eq-ibvp-u0} in the special case of $K=2$, $\al_1=\al_2=1/2$ and $d=1$ by means of Carleman estimates. Hence, in this article we also keep an eye on the minor target of studying a small inverse problem for \eqref{eq-ibvp-u0} employing the coupling effect (see Problem \ref{prob-IP}).

For the forward problem, we prove the unique existence of a mild solution to \eqref{eq-ibvp-u0} along with its stability with respect to initial values $u_0^{(k)}$ and source terms $F_k$ in Theorem \ref{thm-well}. These results turn out to be parallel to those for single equations obtained in \cite{GLY15,SY11}, namely, the improvement of spatial regularity of solutions from that of $u_0^{(k)},F_k$ is at most $2$. 
The proof generalizes the idea in \cite{GLY15,LHY20} to regard the lower order terms as a part of source terms and construct a sequence by Picard iteration, whose limit is indeed the mild solution. Further, restricting the system \eqref{eq-ibvp-u0} to a weakly coupled one with $t$-independent coefficients, we investigate the long-time asymptotic behavior of the solution and obtain the sharp decay rate $t^{-\al_K}$ in Theorem \ref{thm-asymp}, where $\al_K$ stands for the lowest order of the fractional derivatives. Such a result coincides with that for a multi-term time-fractional diffusion equation proved in \cite{LLY15}. 
Meanwhile, we study a parameter inverse problem on determining all the orders $\al_1,\dots,\al_K$ by the observation of a single component $u_{k_0}$ at $\{\bm x_0\}\times(0,T)$. Using the strong maximum principle for coupled elliptic systems, in Theorem \ref{thm-ip} we show the uniqueness of this inverse problem provided that the system is not decoupled. This reflects the interaction between components as expected, which is only available for coupled systems.

The rest of this paper is organized as follows. In Section \ref{sec-premain}, we collect necessary preliminaries and give statements of the main results. Then Sections \ref{sec-well}--\ref{sec-alpha} are devoted to the proofs of Theorems \ref{thm-well},\ref{thm-asymp} and \ref{thm-ip}, respectively. Some concluding remarks are provided in Section \ref{sec-rem}, and finally the proofs of some technical details are postponed to Appendix \ref{sec-app}.

\section{Preliminaries and main results}\label{sec-premain}

We start from fixing some general settings and notations. Let $L^2(\Om)$ be the possibly complex-valued square-integrable function space and $H_0^1(\Om)$, $H^2(\Om)$, $H^{-1}(\Om)$, $W^{1,\infty}(\Om)$ etc.\! be the usual Sobolev spaces (e.g.\! \cite{Adams}). The inner products of $L^2(\Om)$ and $\BC^K$ ($K\in\BN$) are defined by
\begin{gather*}
(f,g):=\int_\Om f(\bm x)\,\ov{g(\bm x)}\,\rd\bm x,\quad f,g\in L^2(\Om),\\
\bm\xi\cdot\bm\ze:=\sum_{k=1}^K\xi_k\ov{\ze_k},\quad\bm\xi=(\xi_1,\dots,\xi_K)^\T,\bm\ze=(\ze_1,\dots,\ze_K)^\T\in\BC^K,
\end{gather*}
respectively, which induce the respective norms
\[
\|f\|_{L^2(\Om)}:=(f,f)^{1/2},\quad|\bm\xi|:=(\bm\xi\cdot\bm\xi)^{1/2}.
\]
With slight abuse of notation, we also denote the length of a multi-index $\bm j=(j_1,\dots,j_K)\in(\BN\cup\{0\})^K$ by $|\bm j|$, i.e., $|\bm j|=\sum_{k=1}^K j_k$. Given a Banach space $X$, the norm of the product space $X^K$ ($K\in\BN$) is defined as
\[
\|\bm f\|_{X^K}:=\sum_{k=1}^K\|f_k\|_X,\quad\bm f=(f_1,\dots,f_K)^\T\in X^K.
\]
Throughout this article, we abbreviate $\|\bm\psi\|_{X^K}$ as $\|\bm\psi\|_X$ for the sake of conciseness. Similarly, the inner product of $(L^2(\Om))^K$ is abbreviated as $(\bm f,\bm g)$: 
\[
(\bm f,\bm g)=\int_\Om\bm f\cdot\bm g\,\rd\bm x=\sum_{k=1}^K(f_k,g_k),\quad\bm f=(f_1,\dots,f_K)^\T,\bm g=(g_1,\dots,g_K)^\T\in(L^2(\Om))^K.
\]

For the coefficients $\bm b_{k\ell}$ and $c_{k\ell}$ in \eqref{eq-ibvp-u0}, we assume
\begin{equation}\label{eq-reg-bc}
\bm b_{k\ell}\in (L^\infty(0,T;W^{j,\infty}(\Om)))^d,\quad c_{k\ell}\in L^\infty(0,T;W^{j,\infty}(\Om)),\quad k,\ell=1,\dots,K
\end{equation}
with $j=0$ or $j=1$. Later we will see that the choice of $j$ influences the regularity of solutions.

The governing equations in \eqref{eq-ibvp-u0} look lengthy and we shall introduce some notations for later convenience. In the sequel, we denote $\bm u:=(u_1,\dots,u_K)^\T$ and introduce second-order elliptic operators $\cA_k$ along with first-order differential operators $\cP_k$ ($k=1,\dots,K$) as
\begin{gather*}
\cA_k:\cD(\cA_k):=H^2(\Om)\cap H_0^1(\Om)\longrightarrow L^2(\Om),\quad\psi\longmapsto-\mathrm{div}(\bm A_k(\bm x)\nb\psi),\\
\cP_k:(H_0^1(\Om))^K\longrightarrow L^2(\Om),\quad\bm\psi=(\psi_1,\dots,\psi_K)^\T \longmapsto\sum_{\ell=1}^K(\bm b_{k\ell}(\bm x,t)\cdot\nb\psi_\ell+c_{k\ell}(\bm x,t)\psi_\ell).
\end{gather*}
Then we can rewrite the governing equations in \eqref{eq-ibvp-u0} as
\[
(\pa_t^{\al_k}+\cA_k)u_k=\cP_k\bm u+F_k\quad\mbox{in }\Om\times(0,T),\ k=1,\dots,K.
\]
Further introducing
\begin{gather*}
\bm\al:=(\al_1,\dots,\al_K)^\T,\quad\bm F(\bm x,t):=(F_1(\bm x,t),\dots,F_K(\bm x,t))^\T,\quad\bm u_0(\bm x):=(u_0^{(1)}(\bm x),\dots,u_0^{(K)}(\bm x))^\T,\\
\pa_t^{\bm\al}\bm u:=(\pa_t^{\al_1}u_1,\dots,\pa_t^{\al_K}u_K)^\T,\quad\cA\bm u:=(\cA_1u_1,\dots,\cA_K u_K)^\T,\quad \cP\bm u:=(\cP_1\bm u,\dots,\cP_K\bm u)^\T,
\end{gather*}
we can collect the above $K$ equations to represent \eqref{eq-ibvp-u0} in a vector form as
\begin{equation}\label{eq-ibvp-u1}
\begin{cases}
(\pa_t^{\bm\al}+\cA)\bm u=\cP\bm u+\bm F & \mbox{in }\Om\times(0,T),\\
\bm u=\bm u_0 & \mbox{in }\Om\times\{0\},\\
\bm u=\bm 0 & \mbox{on }\pa\Om\times(0,T).
\end{cases}
\end{equation}
In the sequel, we abbreviate $\bm u(t)=\bm u(\,\cdot\,,t)$ as a vector-valued $\bm x$-dependent function.

Next, we introduce the eigensystem $\{(\la_n^{(k)},\vp_n^{(k)})\}_{n\in\BN}$ of the operator $\cA_k$ ($k=1,\dots,K$). More precisely, the sequence $\{\la_n^{(k)}\}\subset\BR_+$ satisfies
\[
0<\la_1^{(k)}<\la_2^{(k)}\le\cdots,\quad\lim_{n\to\infty}\la_n^{(k)}=+\infty,\quad k=1,\dots,K
\]
and $\{\vp_n^{(k)}\}\subset\cD(\cA_k)$ forms a complete orthonormal basis of $L^2(\Om)$ and satisfies
\[
\begin{cases}
\cA_k\vp_n^{(k)}=\la_n^{(k)}\vp_n^{(k)} & \mbox{in }\Om,\\
\vp_n^{(k)}=0 & \mbox{on }\pa\Om,
\end{cases}\quad n\in\BN.
\]
Then for $\ga>0$ and $k=1,\dots,K$, one can define the fractional power $\cA_k^\ga$ as (e.g.\! Pazy \cite{P83})
\[
\cD(\cA_k^\ga):=\left\{\psi\in L^2(\Om)\left|\,\sum_{n=1}^\infty\left|(\la_n^{(k)})^\ga(\psi,\vp_n^{(k)})\right|^2<\infty\right.\right\},\quad\cA_k^\ga\psi:=\sum_{n=1}^\infty(\la_n^{(k)})^\ga(\psi,\vp_n^{(k)})\vp_n^{(k)},
\]
which is equipped with the norm
\[
\|\psi\|_{\cD(\cA_k^\ga)}:=\left(\sum_{n=1}^\infty\left|(\la_n^{(k)})^\ga(\psi,\vp_n^{(k)})\right|^2\right)^{1/2}.
\]
For $-1\le\ga<0$ and $k=1,\dots,K$, we define $\cD(\cA_k^\ga)$ as the dual space of $\cD(\cA_k^{-\ga})$ similarly with the norm
\[
\|\psi\|_{\cD(\cA_k^\ga)}:=\left(\sum_{n=1}^\infty\left|(\la_n^{(k)})^\ga\langle\psi,\vp_n^{(k)}\rangle\right|^2\right)^{1/2},
\]
where $\langle\,\cdot\,,\,\cdot\,\rangle$ denotes the pairing between $\cD(\cA_k^\ga)$ and $\cD(\cA_k^{-\ga})$. Then $\cD(\cA_k^\ga)$ is well-defined for all $\ga\ge-1$ and we have
\[
\|\psi\|_{\cD(\cA_k^\ga)}=\|\cA_k^\ga\psi\|_{L^2(\Om)},\quad\psi\in\cD(\cA_k^\ga).
\]
We know that $\cD(\cA_k^\ga)$ is a Hilbert space and satisfies $\cD(\cA_k^\ga)\subset H^{2\ga}(\Om)$ for $\ga>0$. Especially, there hold $\cD(\cA_k^{1/2})=H_0^1(\Om)$, $\cD(\cA_k^{-1/2})=H^{-1}(\Om)$ and the norm equivalence $\|\cdot\|_{\cD(\cA_k^\ga)}\sim\|\cdot\|_{H^{2\ga}(\Om)}$. 
Similarly as before, for $\bm\psi=(\psi_1,\dots,\psi_K)^\T$ and $\ga\ge-1$, we denote
\[
\cA^\ga\bm\psi:=(\cA_1^\ga\psi_1,\dots,\cA_K^\ga\psi_K)^\T
\]
and define $\cD(\cA^\ga)$ as well as its norm $\|\cdot\|_{\cD(\cA^\ga)}$ correspondingly.

Now we invoke the frequently used Mittag-Leffler function (e.g.\! \cite{P99})
\[
E_{\al,\be}(z):=\sum_{m=0}^\infty\f{z^m}{\Ga(\al m+\be)},\quad\al\in\BR_+,\ \be\in\BR,\ z\in\BC,
\]
where $\Ga(\,\cdot\,)$ is the Gamma function. The following estimate of $E_{\al,\be}(z)$ is well-known.

\begin{lem}[{see \cite[Theorem 1.5]{P99}}]\label{lem-ML}
Let $0<\al<2$ and $\be>0$. 
Then there exists a constant $C_0>0$ depending only on $\al,\be$ such that
\[
0<|E_{\al,\be}(z)|\le C_0(1+z)^{(1-\be)/\al}\exp(z^{1/\al}),\quad z>0.
\]
\end{lem}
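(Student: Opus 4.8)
The plan is to separate the two assertions and, for the upper bound, to split the range of $z$ into a bounded piece and a far-field piece. The lower bound is immediate: since $\al>0$ and $\be>0$ we have $\al m+\be>0$ for every integer $m\ge0$, hence $\Ga(\al m+\be)>0$ and every term of the series $E_{\al,\be}(z)=\sum_{m=0}^\infty z^m/\Ga(\al m+\be)$ is strictly positive for $z>0$. Thus $E_{\al,\be}(z)\ge1/\Ga(\be)>0$, which gives $0<|E_{\al,\be}(z)|$ and shows $E_{\al,\be}$ is real and positive, so that $|E_{\al,\be}(z)|=E_{\al,\be}(z)$ throughout. On a bounded interval $0<z\le1$ the upper bound then follows from continuity alone: $E_{\al,\be}$ is entire, hence bounded on $[0,1]$, while $(1+z)^{(1-\be)/\al}\exp(z^{1/\al})$ is continuous and bounded below by a positive constant there, so enlarging $C_0$ secures the inequality on this piece. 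All the difficulty is therefore concentrated in the regime $z\to+\infty$.

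The core of the argument is the large-$z$ asymptotics, which I would obtain from the Hankel-type contour representation
\[
E_{\al,\be}(z)=\f1{2\pi\ri\,\al}\int_\ga\f{\e^{\ze^{1/\al}}\,\ze^{(1-\be)/\al}}{\ze-z}\,\rd\ze,
\]
valid for $0<\al<2$, where $\ga$ wraps around the branch cut along the negative real axis with opening half-angle $\mu\in(\pi\al/2,\min\{\pi,\pi\al\})$; the constraint $\al<2$ is exactly what makes this angular window nonempty. For $z>0$ large the simple pole $\ze=z$ is crossed when $\ga$ is deformed outward, and this deformation picks up the residue
\[
\mathrm{Res}_{\ze=z}\f{\e^{\ze^{1/\al}}\,\ze^{(1-\be)/\al}}{\ze-z}=\e^{z^{1/\al}}\,z^{(1-\be)/\al},
\]
contributing precisely the leading term $\f1\al\,z^{(1-\be)/\al}\exp(z^{1/\al})$. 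On the deformed contour $\rRe(\ze^{1/\al})$ stays bounded above, so expanding $(\ze-z)^{-1}$ in powers of $z^{-1}$ and integrating term by term against Hankel's integral for $1/\Ga$ produces the algebraically decaying correction $-\sum_{k=1}^N z^{-k}/\Ga(\be-\al k)+O(z^{-N-1})$.

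The main obstacle will be making this contour manipulation rigorous: justifying the deformation across the pole, checking that the residue is captured with the correct orientation, and bounding the residual integral uniformly so as to confirm it is $O(z^{-1})$ rather than of the same exponential order as the leading term. Granting this, for $z\ge1$ the leading term dominates and the corrections are $O(z^{-1})$, whence $E_{\al,\be}(z)\le C\,z^{(1-\be)/\al}\exp(z^{1/\al})$; since $z^{(1-\be)/\al}$ and $(1+z)^{(1-\be)/\al}$ are comparable up to a fixed constant on $[1,\infty)$, this is exactly the claimed estimate. Combining the bounded-interval bound with the far-field bound and enlarging $C_0$ if necessary completes the proof.
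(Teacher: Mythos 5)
Your argument is correct and is precisely the standard proof of this estimate from the cited source (Podlubny, Theorem 1.5): the paper itself offers no proof of this lemma, only the citation, and your reconstruction via the Hankel-type integral representation, the residue at $\ze=z$ giving the leading term $\al^{-1}z^{(1-\be)/\al}\exp(z^{1/\al})$, the algebraically decaying correction, and the compactness argument on the bounded piece matches that reference's argument. The only cosmetic remark is that the asymptotic expansion controls the remainder only for $z\ge R$ with some possibly large $R$, so the ``bounded piece'' handled by continuity should be $[0,R]$ rather than $[0,1]$; this changes nothing in the argument.
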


Employing the Mittag-Leffler functions, we further define the resolvent operator $\cS_k(z):L^2(\Om)\longrightarrow L^2(\Om)$ as well as its termwise differentiation $\cS_k'(z):L^2(\Om)\longrightarrow L^2(\Om)$ for $z\in\BC\setminus\{0\}$ and $k=1,\dots,K$ by
\[
\begin{aligned}
\cS_k(z)\psi & :=\sum_{n=1}^\infty E_{\al_k,1}(-\la_n^{(k)} z^{\al_k})(\psi,\vp_n^{(k)})\vp_n^{(k)},\\
\cS_k'(z)\psi & :=-z^{\al_k-1}\sum_{n=1}^\infty\la_n^{(k)}E_{\al_k,\al_k}(-\la_n^{(k)} z^{\al_k})(\psi,\vp_n^{(k)})\vp_n^{(k)},
\end{aligned}\quad\psi\in L^2(\Om).
\]
We recall the key properties of $\cS_k(z)$ and $\cS_k'(z)$.

\begin{lem}[see \cite{LHY20}]\label{lem-Sk}
For $k=1,\dots,K$ and $\psi\in L^2(\Om),$ the followings hold true.

{\rm(i)} For $0\le\ga\le1,$ there exists a constant $C_1>0$ depending only on $\Om,\bm\al,\cA$ such that
\begin{align}
\|\cA_k^\ga\cS_k(z)\psi\|_{L^2(\Om)} & \le C_1\|\psi\|_{L^2(\Om)}|z|^{-\al_k\ga},\nonumber\\
\|\cA_k^{\ga-1}\cS_k'(z)\psi\|_{L^2(\Om)} & \le C_1\|\psi\|_{L^2(\Om)}|z|^{\al_k(1-\ga)-1}\label{eq-Sk2}
\end{align}
for all $z\in\{z\in\BC\setminus\{0\}\mid|\arg z|<\pi/2\}$.

{\rm(ii)} There holds $\lim_{z\to0}\|\cS_k(z)\psi-\psi\|_{L^2(\Om)}=0$.
\end{lem}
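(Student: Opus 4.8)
The plan is to diagonalize against the orthonormal basis $\{\vp_n^{(k)}\}_n$ and reduce both inequalities in (i) to a single scalar estimate for the Mittag-Leffler factor. Fix $k$ and $\psi\in L^2(\Om)$ and write $\psi=\sum_n(\psi,\vp_n^{(k)})\vp_n^{(k)}$. Since $\cA_k^\ga\vp_n^{(k)}=(\la_n^{(k)})^\ga\vp_n^{(k)}$, applying $\cA_k^\ga$ termwise to the series defining $\cS_k(z)\psi$ and invoking Parseval's identity gives
\[
\|\cA_k^\ga\cS_k(z)\psi\|_{L^2(\Om)}^2=\sum_{n=1}^\infty(\la_n^{(k)})^{2\ga}\bigl|E_{\al_k,1}(-\la_n^{(k)}z^{\al_k})\bigr|^2\,\bigl|(\psi,\vp_n^{(k)})\bigr|^2,
\]
and an entirely analogous identity holds for $\cA_k^{\ga-1}\cS_k'(z)\psi$, in which the combination $(\la_n^{(k)})^{\ga-1}\la_n^{(k)}=(\la_n^{(k)})^\ga$ appears together with $E_{\al_k,\al_k}$ and the scalar prefactor $|z|^{\al_k-1}$. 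Thus both estimates reduce to controlling $(\la_n^{(k)})^{2\ga}|E_{\al_k,\be}(-\la_n^{(k)}z^{\al_k})|^2$ uniformly in $n$, for $\be\in\{1,\al_k\}$.

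The decisive ingredient is the decay estimate for the Mittag-Leffler function away from the positive real axis (the standard companion of Lemma \ref{lem-ML}; see \cite{P99}): for $0<\al_k<1$ there is $C>0$ with $|E_{\al_k,\be}(w)|\le C/(1+|w|)$ whenever $\mu\le|\arg w|\le\pi$ for a fixed $\mu\in(\al_k\pi/2,\min\{\al_k\pi,\pi-\al_k\pi/2\})$. The hard part, and the step I would treat most carefully, is verifying that the arguments $w=-\la_n^{(k)}z^{\al_k}$ genuinely lie in this sector, uniformly in $n$ and in $z$ over $|\arg z|<\pi/2$. Since $\la_n^{(k)}>0$ and $|\arg z^{\al_k}|=\al_k|\arg z|<\al_k\pi/2$, one has $|\arg w|>\pi-\al_k\pi/2$; because $0<\al_k<1$ this lower bound both exceeds $\al_k\pi/2$ and dominates $\mu$, so an admissible $\mu$ can be chosen independently of $n$ and $z$. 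This is precisely where the restriction $\al_k<1$ is genuinely used.

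With the decay estimate in hand, the remainder is an elementary scalar bound. Writing $x:=\la_n^{(k)}|z|^{\al_k}\ge0$ one has $(\la_n^{(k)})^{2\ga}=|z|^{-2\al_k\ga}x^{2\ga}$, and since $0\le\ga\le1$,
\[
\frac{x^{2\ga}}{(1+x)^2}\le(1+x)^{2\ga-2}\le1.
\]
Hence $(\la_n^{(k)})^{2\ga}|E_{\al_k,\be}(-\la_n^{(k)}z^{\al_k})|^2\le C^2|z|^{-2\al_k\ga}$, and summing against $\sum_n|(\psi,\vp_n^{(k)})|^2=\|\psi\|_{L^2(\Om)}^2$ yields the first inequality of (i); multiplying by the prefactor $|z|^{2(\al_k-1)}$ in the $\cS_k'$ case produces the exponent $\al_k(1-\ga)-1$ and the second inequality. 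The same bounds certify that the series converge in $L^2(\Om)$, so the termwise application of $\cA_k^\ga$ and $\cA_k^{\ga-1}$ is legitimate.

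For (ii), I would use $E_{\al_k,1}(0)=1/\Ga(1)=1$ together with the completeness of $\{\vp_n^{(k)}\}$ to write
\[
\|\cS_k(z)\psi-\psi\|_{L^2(\Om)}^2=\sum_{n=1}^\infty\bigl|E_{\al_k,1}(-\la_n^{(k)}z^{\al_k})-1\bigr|^2\,\bigl|(\psi,\vp_n^{(k)})\bigr|^2.
\]
Each summand tends to $0$ as $z\to0$ by continuity of $E_{\al_k,1}$ at the origin, while the sector estimate supplies the uniform bound $|E_{\al_k,1}(-\la_n^{(k)}z^{\al_k})-1|^2\le(C+1)^2$. I would then pass to the limit by a standard tail-splitting argument: given $\ve>0$, choose $N$ with $\sum_{n>N}|(\psi,\vp_n^{(k)})|^2<\ve$, dominate the tail by $(C+1)^2\ve$, and let $z\to0$ in the remaining finite sum, which vanishes termwise. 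This yields $\lim_{z\to0}\|\cS_k(z)\psi-\psi\|_{L^2(\Om)}=0$ and completes the proof.
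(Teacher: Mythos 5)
Your proof is correct: the paper itself does not prove this lemma (it is quoted from \cite{LHY20}), and your argument --- eigenfunction expansion, Parseval, and the uniform sector decay $|E_{\al_k,\be}(w)|\le C/(1+|w|)$ for $\mu\le|\arg w|\le\pi$, with the careful check that $-\la_n^{(k)}z^{\al_k}$ stays in that sector uniformly because $|\arg z|<\pi/2$ and $\al_k<1$ --- is exactly the standard proof used in the cited reference. The only incidental point is that the paper's displayed formula for $\cS_k'(z)$ contains the prefactor $t^{\al_k-1}$ where $z^{\al_k-1}$ is meant; you have implicitly read it the intended way.
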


In the same manner as before, for $\bm\psi=(\psi_1,\dots,\psi_K)^\T$ we denote
\[
\cS(z)\bm\psi:=(\cS_1(z)\psi_1,\dots,\cS_K(z)\psi_K)^\T.
\]

Let us close the preliminaries by considering the initial-boundary value problem
\begin{equation}\label{eq-ibvp-v}
\begin{cases}
(\pa_t^{\bm\al}+\cA)\bm v=\bm G & \mbox{in }\Om\times(0,T),\\
\bm v=\bm v_0 & \mbox{in }\Om\times\{0\},\\
\bm v=\bm0 & \mbox{on }\pa\Om\times(0,T)
\end{cases}
\end{equation}
for $K$ independent equations of $\bm v=(v_1,\dots,v_K)^\T$. The following lemma is a direct consequence of the well-posedness results for single equations.

\begin{lem}\label{lem-ibvp-v}
Let $\bm v_0\in(L^2(\Om))^K$ and $\bm G\in(L^p(0,T;L^2(\Om)))^K$ with $p\in[1,\infty]$.

{\rm(i)} If $\bm G\equiv\bm 0,$ then \eqref{eq-ibvp-v} admits a unique solution $\bm v\in L^{1/\ga}(0,T;\cD(\cA^\ga))$ $(0\le\ga\le1)$ which takes the form
\begin{equation}\label{eq-rep-v1}
\bm v(t)=\cS(t)\bm v_0,\quad0<t<T
\end{equation}
and satisfies
\begin{equation}\label{eq-IC-v}
\lim_{t\to0}\|\bm v(t)-\bm v_0\|_{L^2(\Om)}=0.
\end{equation}
Here we understand $1/\ga=\infty$ if $\ga=0$. Moreover$,$ there holds
\begin{equation}\label{eq-est-v1}
\|\bm v(t)\|_{\cD(\cA^\ga)}\le C_1\sum_{k=1}^K\|v_0^{(k)}\|_{L^2(\Om)}t^{-\al_k\ga},\quad0<t<T,\ 0\le\ga\le1,
\end{equation}
where $C_1$ is the constant in Lemma $\ref{lem-Sk}$.

{\rm(ii)} If $\bm v_0\equiv\bm 0,$ then \eqref{eq-ibvp-v} admits a unique solution $\bm v\in L^p(0,T;\cD(\cA^\ga))$ which takes the form
\begin{equation}\label{eq-rep-v2}
\bm v(t)=-\int_0^t\cA^{-1}\cS'(t-\tau)\bm G(\tau)\,\rd\tau,\quad0<t<T,
\end{equation}
where $\ga=1$ if $p=2$ and $0\le\ga<1$ if $p\ne2$. Moreover$,$ there exists a constant $C_2>0$ depending only on $\Om,\bm\al,\cA,T,\ga$ such that
\begin{equation}\label{eq-est-v2}
\|\bm v\|_{L^p(0,T;\cD(\cA^\ga))}\le C_2\|\bm G\|_{L^p(0,T;L^2(\Om))}.
\end{equation}
\end{lem}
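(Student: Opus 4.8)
The plan is to exploit the fact that the system \eqref{eq-ibvp-v} is fully decoupled: since no operator $\cP$ appears, the $k$-th component $v_k$ solves the single time-fractional equation $(\pa_t^{\al_k}+\cA_k)v_k=G_k$ in $\Om\times(0,T)$ with $v_k=v_0^{(k)}$ in $\Om\times\{0\}$ and $v_k=0$ on $\pa\Om\times(0,T)$. Hence existence, uniqueness and the solution representation should all reduce, component by component, to the corresponding well-posedness results for a single equation (e.g.\ \cite{SY11,GLY15,LHY20}), after which I would assemble the vector estimates through the product-space norm $\|\bm v\|_{\cD(\cA^\ga)}=\sum_{k=1}^K\|\cA_k^\ga v_k\|_{L^2(\Om)}$. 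With this reduction in hand, the remaining work is purely quantitative.

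For part (i), I would first record the scalar representation $v_k(t)=\cS_k(t)v_0^{(k)}$, so that $\bm v(t)=\cS(t)\bm v_0$ by the definition of $\cS$, giving \eqref{eq-rep-v1}. The spatial estimate \eqref{eq-est-v1} then follows by applying the first bound of Lemma \ref{lem-Sk}(i) to each component and summing over $k$. To obtain the temporal regularity $\bm v\in L^{1/\ga}(0,T;\cD(\cA^\ga))$, I would raise the pointwise bound $\|\cA_k^\ga v_k(t)\|_{L^2(\Om)}\le C_1\|v_0^{(k)}\|_{L^2(\Om)}t^{-\al_k\ga}$ to the power $1/\ga$ and integrate: since each $\al_k<1$ we have $\al_k\ga\cdot(1/\ga)=\al_k<1$, so the dominant singularity $t^{-\al_k}$ is integrable on $(0,T)$ (for $\ga=0$ the bound is simply $t$-uniform, matching $1/\ga=\infty$). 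The initial condition \eqref{eq-IC-v} is Lemma \ref{lem-Sk}(ii) summed over $k$, and uniqueness descends from the scalar case.

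For part (ii), Duhamel's principle for the single equation yields $v_k(t)=-\int_0^t\cA_k^{-1}\cS_k'(t-\tau)G_k(\tau)\,\rd\tau$, which is precisely \eqref{eq-rep-v2} componentwise. To prove \eqref{eq-est-v2} in the range $p\ne2$, $0\le\ga<1$, I would write $\cA_k^\ga v_k(t)=-\int_0^t\cA_k^{\ga-1}\cS_k'(t-\tau)G_k(\tau)\,\rd\tau$ and apply the second bound \eqref{eq-Sk2}, which controls the kernel by $C_1(t-\tau)^{\al_k(1-\ga)-1}$. Since $\al_k(1-\ga)>0$ exactly when $\ga<1$, this kernel lies in $L^1(0,T)$, so Young's convolution inequality gives $\|\cA_k^\ga v_k\|_{L^p(0,T;L^2(\Om))}\le C_1\|t^{\al_k(1-\ga)-1}\|_{L^1(0,T)}\|G_k\|_{L^p(0,T;L^2(\Om))}$, and summation over $k$ produces \eqref{eq-est-v2} with uniqueness again inherited from the scalar problems.

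The hard part will be the endpoint case $p=2$, $\ga=1$, where the kernel exponent degenerates to $-1$ and the $L^1$ kernel bound above fails, so Young's inequality is no longer available. Here I would not push the crude pointwise estimate but instead invoke the sharp maximal $L^2$-regularity for a single time-fractional equation (Sakamoto--Yamamoto \cite{SY11}): a source in $L^2(0,T;L^2(\Om))$ with zero initial data yields a solution in $L^2(0,T;\cD(\cA_k))$ with the corresponding norm bound. If a self-contained argument is preferred, the estimate can be recovered by expanding $G_k$ in the eigenbasis $\{\vp_n^{(k)}\}$, using Parseval in $\bm x$, and establishing the scalar convolution inequality $\|\la_n^{(k)}E_{\al_k,\al_k}(-\la_n^{(k)}t^{\al_k})\,t^{\al_k-1}*g\|_{L^2(0,T)}\le C\|g\|_{L^2(0,T)}$ uniformly in $n$, which exploits the cancellation in the Mittag--Leffler kernel that the pointwise bound of Lemma \ref{lem-ML} discards. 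Summing the resulting componentwise bounds over $k$ then completes \eqref{eq-est-v2}.
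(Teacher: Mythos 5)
Your proposal is correct and follows essentially the same route as the paper: the paper likewise treats \eqref{eq-ibvp-v} componentwise as $K$ decoupled scalar problems, takes \eqref{eq-rep-v1}, \eqref{eq-rep-v2} and \eqref{eq-IC-v} from the single-equation literature \cite{SY11}, derives \eqref{eq-est-v1} directly from Lemma \ref{lem-Sk}, and obtains \eqref{eq-est-v2} by citing \cite[Theorem 1.4]{GLY15} for $p=2$ (the maximal $L^2$-regularity / eigenfunction-expansion argument you sketch) and \cite[Theorem 2.2]{LLY15} for $p\ne2$ (your Young's-inequality argument, whose kernel constant indeed blows up as $\ga\to1$, matching the paper's remark on $C_2$). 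Your identification of the $p=2$, $\ga=1$ endpoint as the only genuinely delicate case, and the uniform-in-$n$ $L^1$ bound on the Mittag--Leffler kernel as the mechanism, is exactly what the cited scalar results rest on.
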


The solution representations \eqref{eq-rep-v1} and \eqref{eq-rep-v2} as well as the convergence \eqref{eq-IC-v} are well-known in literature (see e.g.\! \cite{SY11}), and the estimate \eqref{eq-est-v1} follows immediately from Lemma \ref{lem-Sk}. As for the estimate \eqref{eq-est-v2}, we refer to \cite[Theorem 1.4]{GLY15} for the case of $p=2$ and \cite[Theorem 2.2]{LLY15} for the case of $p\ne2$. In particular, the constant $C_2$ in \eqref{eq-est-v2} may tend to $\infty$ as $\ga\to1$ if $p\ne2$, while keeps uniform with respect to $\ga$ if $p=2$.

Now we are well prepared to investigate the initial-boundary value problem \eqref{eq-ibvp-u0} or equivalently \eqref{eq-ibvp-u1}. Following the same line of \cite{GLY15,LHY20} and regarding the lower order term $\cP\bm u$ in \eqref{eq-ibvp-u1} as a part of the source term, we employ the representations \eqref{eq-rep-v1} and \eqref{eq-rep-v2} to conclude that the solution to \eqref{eq-ibvp-u1} should formally satisfy
\begin{equation}\label{eq-rep-u}
\bm u=\bm w+\cQ\bm u\quad\mbox{in }\Om\times(0,T),
\end{equation}
where
\begin{gather}
\bm w(t):=\cS(t)\bm u_0-\int_0^t\cA^{-1}\cS^\prime(t-\tau)\bm F(\tau)\,\rd\tau,\nonumber\\
\cQ\bm u(t):=-\int_0^t\cA^{-1}\cS'(t-\tau)(\cP\bm u)(\tau)\,\rd\tau.\label{eq-def-Q}
\end{gather}
This encourages us to propose the following definition of a solution to \eqref{eq-ibvp-u1}.

\begin{defi}[Mild solution]\label{def-mild}
Let $\bm u_0\in(L^2(\Om))^K,$ $\bm F\in(L^p(0,T;L^2(\Om)))^K$ with $p\in[1,\infty]$ and assume \eqref{eq-reg-bc} with $j=0$. We say that $\bm u$ is a mild solution to the initial-boundary value problem \eqref{eq-ibvp-u1} if it satisfies the integral equation \eqref{eq-rep-u}.
\end{defi}

Now we state the first main result in this article, which validates the well-posedness of 
the initial-boundary value problem \eqref{eq-ibvp-u1} defined above.

\begin{thm}\label{thm-well}
Under the same assumptions in Definition $\ref{def-mild}$, the followings hold true.

{\rm(i)} If $\bm F\equiv\bm 0,$ then there exists a unique mild solution $\bm u\in L^{1/\ga}(0,T;\cD(\cA^\ga))$ $(0\le\ga\le1)$ to \eqref{eq-ibvp-u1} satisfying
\begin{equation}\label{eq-IC-u}
\lim_{t\to0}\|\bm u(t)-\bm u_0\|_{L^2(\Om)}=0.
\end{equation}
Here we understand $1/\ga=\infty$ if $\ga=0$. Moreover$,$ there exist constants $C>0$ and $C_{T,\ga}>0$ depending only on $\Om,\bm\al,\cA,\cP$ and $\Om,\bm\al,\cA,\cP,T,\ga,$ respectively such that
\begin{gather}
\|\bm u(t)\|_{\cD(\cA^\ga)}\le C\exp(C\,t)\|\bm u_0\|_{L^2(\Om)}t^{-\al_1\ga},\quad0<t<T,\ 0\le\ga<1,\label{eq-est-u0}\\
\|\bm u\|_{L^{1/\ga}(0,T;\cD(\cA^\ga))}\le C_{T,\ga}\|\bm u_0\|_{L^2(\Om)},\quad0\le\ga\le1.\label{eq-est-u1}
\end{gather}
If we further assume \eqref{eq-reg-bc} with $j=1,$ then \eqref{eq-est-u0} also holds for $\ga=1$. Moreover$,$ the solution $\bm u:(0,T]\longrightarrow\cD(\cA^\ga)$ is analytic for $\ga\in[0,1)$.

{\rm(ii)} Let $\ga=1$ if $p=2$ and $0\le\ga<1$ if $p\ne2$. If $\bm u_0\equiv\bm 0,$ then \eqref{eq-ibvp-u1} admits a unique mild solution $\bm u\in L^p(0,T;\cD(\cA^\ga))$. Moreover$,$ there exists a constant $C_{T,\ga}>0$ depending only on $\Om,\bm\al,\cA,\cP,T,\ga$ such that
\begin{equation}\label{eq-est-u2}
\|\bm u\|_{L^p(0,T;\cD(\cA^\ga))}\le C_{T,\ga}\|\bm F\|_{L^p(0,T;L^2(\Om))}.
\end{equation}
\end{thm}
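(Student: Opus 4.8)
The plan is to solve the fixed-point equation \eqref{eq-rep-u} by a Neumann series, handling parts (i) and (ii) in a unified manner. Since the two cases differ only through the inhomogeneous term $\bm w$, I would first record the base estimate for $\bm w$ — from \eqref{eq-est-v1} of Lemma \ref{lem-ibvp-v} in case (i) (where $\bm F\equiv\bm0$, so $\bm w(t)=\cS(t)\bm u_0$ carries the singularity $t^{-\al_1\ga}$) and from \eqref{eq-est-v2} in case (ii) (where $\bm u_0\equiv\bm0$, so $\bm w\in L^p(0,T;\cD(\cA^\ga))$) — and then show that the formal series $\bm u=\sum_{m=0}^\infty\cQ^m\bm w$ converges in the asserted space. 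Since $(I-\cQ)\sum_{m=0}^\infty\cQ^m\bm w=\bm w$ once convergence holds, its sum solves \eqref{eq-rep-u} and is the desired mild solution.

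The heart of the argument is a mapping estimate for the operator $\cQ$ defined in \eqref{eq-def-Q}. Under \eqref{eq-reg-bc} with $j=0$, the first-order operator $\cP$ is bounded from $\cD(\cA^{1/2})=(H_0^1(\Om))^K$ into $(L^2(\Om))^K$, so $\|(\cP\bm g)(\tau)\|_{L^2(\Om)}\le C\|\bm g(\tau)\|_{\cD(\cA^{1/2})}$. Combining this with \eqref{eq-Sk2} applied componentwise with power $\ga-1$ yields, for $0\le\ga<1$, the weakly singular Volterra bound
\[
\|\cQ\bm g(t)\|_{\cD(\cA^\ga)}\le C\int_0^t(t-\tau)^{\al_K(1-\ga)-1}\|\bm g(\tau)\|_{\cD(\cA^{1/2})}\,\rd\tau,
\]
whose kernel is integrable precisely because $\al_K(1-\ga)>0$. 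Specializing to $\ga=1/2$ makes the estimate self-contained in $\cD(\cA^{1/2})$, so the iteration closes there; the general index $0\le\ga<1$ is then recovered by a single further application of $\cQ$ to the $\cD(\cA^{1/2})$-controlled iterates. Iterating the above inequality against the base bound for $\bm w$ — while tracking the $t\to0$ singularity $t^{-\al_1\ga}$ — the repeated convolution of power kernels produces $\Ga$-factors in the denominator, so that a generalized (singular) Gronwall argument dominates $\sum_m\|\cQ^m\bm w(t)\|_{\cD(\cA^\ga)}$ by a convergent series of order $C\exp(C\,t)$; this gives \eqref{eq-est-u0} and, after integration in $t$, \eqref{eq-est-u1} and \eqref{eq-est-u2}. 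Uniqueness follows from the same mechanism, since the difference $\bm d$ of two mild solutions satisfies $\bm d=\cQ^m\bm d$ for every $m$ with the right-hand side tending to $0$; and \eqref{eq-IC-u} results from $\cS(t)\bm u_0\to\bm u_0$ (Lemma \ref{lem-Sk}(ii)) together with the vanishing of each $\cQ^m\bm w$ ($m\ge1$) as $t\to0$.

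The endpoint $\ga=1$ is where I expect the main obstacle, because there the kernel exponent $\al_k(1-\ga)-1$ degenerates to $-1$ and loses integrability; this forces the stronger hypothesis \eqref{eq-reg-bc} with $j=1$. Under $j=1$ the operator $\cP$ additionally maps $\cD(\cA)$ into $\cD(\cA^{1/2})=(H_0^1(\Om))^K$, which I would exploit by moving half a derivative off the singular kernel and onto $\cP\bm g$: using that $\cS_k'(z)$ commutes with powers of $\cA_k$, one writes $\cS_k'(z)=\cA_k^{-1/2}\cS_k'(z)\cA_k^{1/2}$ and invokes \eqref{eq-Sk2} with power $-1/2$, restoring the integrable kernel $(t-\tau)^{\al_K/2-1}$ at the cost of requiring $\bm g(\tau)\in\cD(\cA)$. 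The iteration then closes self-consistently near $\ga=1$ in $L^\infty(0,T;\cD(\cA))$, delivering \eqref{eq-est-u0} for $\ga=1$.

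Finally, for the analyticity claim I would note that each $E_{\al_k,1}(-\la_n^{(k)}z^{\al_k})$ extends holomorphically in $z$ to the sector $|\arg z|<\pi/2$, so $\cS(t)$ and hence every iterate $\cQ^m\bm w$ extends analytically from $(0,T]$ into a complex sectorial neighborhood of each interior point. The convergence estimates above are uniform on compact subsets of this sector, so the Weierstrass theorem shows the limit $\bm u:(0,T]\to\cD(\cA^\ga)$ is analytic for $0\le\ga<1$, the restriction $\ga<1$ being imposed by the same loss of kernel integrability encountered at the endpoint.
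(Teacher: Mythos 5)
Your overall strategy is exactly the paper's: regard $\cP\bm u$ as a source, solve $\bm u=\bm w+\cQ\bm u$ by Picard/Neumann iteration, control $\cQ^m$ through repeated weakly singular convolutions whose $\Ga$-factors force convergence (the paper packages this as a multinomial Mittag--Leffler series), prove uniqueness by a singular Gr\"onwall inequality, and obtain analyticity by running the same iteration on the sector $|\arg z|<\pi/2$ with uniform convergence on compacta. Parts (ii), the uniqueness argument, the treatment of \eqref{eq-IC-u} via $\cS(t)\bm u_0\to\bm u_0$, and the $\ga=1$ endpoint under $j=1$ (shifting half a power of $\cA$ off the kernel onto $\cP\bm u$) all match the paper's proof in substance.

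There is, however, one concrete soft spot: your plan to recover $0\le\ga<1/2$ ``by a single further application of $\cQ$ to the $\cD(\cA^{1/2})$-controlled iterates'' does not deliver the stated rate in \eqref{eq-est-u0}. Feeding $\|\bm u(\tau)\|_{\cD(\cA^{1/2})}\lesssim\tau^{-\al_1/2}$ into your Volterra bound gives
\[
\|\cQ\bm u(t)\|_{\cD(\cA^\ga)}\lesssim\sum_{k=1}^K t^{\al_k(1-\ga)-\al_1/2},
\]
and the worst exponent $\al_K(1-\ga)-\al_1/2$ is \emph{below} $-\al_1\ga$ whenever $\al_K(1-\ga)<\al_1(\tfrac12-\ga)$, e.g.\ for $\ga$ near $0$ and $\al_K<\al_1/2$. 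In that regime you neither get the bound $t^{-\al_1\ga}$ of \eqref{eq-est-u0}, nor the $L^{1/\ga}$ integrability in \eqref{eq-est-u1} (the exponent $(\al_K(1-\ga)-\al_1/2)/\ga\to-\infty$ as $\ga\to0$), nor the vanishing of $\cQ\bm u(t)$ as $t\to0$ needed for \eqref{eq-IC-u}. The fix, which is what the paper does for $0\le\ga<1/2$, is to close the Volterra inequality in $\cD(\cA^\ga)$ itself: write $\cA^{\ga-1}\cS'(t-\tau)(\cP\bm u)(\tau)=\cA^{-1/2}\cS'(t-\tau)\,\cA^{\ga-1/2}(\cP\bm u)(\tau)$, so that only half a power of $\cA$ is spent on the kernel (giving $(t-\tau)^{\al_k/2-1}$) while $\cA^{\ga-1/2}(\cP\bm u)(\tau)$ is controlled by $\|\bm u(\tau)\|_{\cD(\cA^\ga)}$ (note $\ga-1/2\le0$); the generalized Gr\"onwall inequality then yields $\|\bm u(t)\|_{\cD(\cA^\ga)}\lesssim t^{-\al_1\ga}+t^{\al_K/2-\al_1\ga}\lesssim t^{-\al_1\ga}$. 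With this adjustment your argument coincides with the paper's.
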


In the above theorem, the system \eqref{eq-ibvp-u1} is coupled up to first derivatives in space. In the sequel, we additionally assume
\begin{equation}\label{eq-weak}
\bm b_{k\ell}\equiv\bm 0,\quad c_{k\ell}(\bm x,t)=c_{k\ell}(\bm x),\quad k,\ell=1,\dots,K
\end{equation}
in \eqref{eq-ibvp-u1}, that is, we restrict \eqref{eq-ibvp-u1} to a weakly coupled system with $t$-independent zeroth order coefficients. For simplicity, by introducing a matrix-valued function $\bm C:=(c_{k\ell})_{1\le k,\ell\le K}$, it is obvious that $\cP\bm u=\bm C\bm u$ in \eqref{eq-ibvp-u1}.

Under the above assumption, we discuss the long-time asymptotic behavior of the solution to \eqref{eq-ibvp-u1}. The same topics were considered by many authors in the case of a single equation (see e.g.\! \cite{LLY15,SY11} and the references therein). On the basis of Theorem \ref{thm-well}, we establish the following theorem.

\begin{thm}\label{thm-asymp}
Let $T=+\infty,$ $\bm F=\bm 0$ and $\bm u_0\in(L^2(\Om))^K$ in \eqref{eq-ibvp-u1}. Assume \eqref{eq-weak} and that $\bm C=(c_{k\ell})_{1\le k,\ell\le K}\in(L^\infty(\Om))^{K\times K}$  is negative semi-definite in $\Om$. Then the unique solution to \eqref{eq-ibvp-u1} admits the asymptotic behavior
$$
\|\bm u(\,\cdot\,,t)\|_{H^2(\Om)}\le C\,t^{-\al_K}\|\bm u_0\|_{L^2(\Om)},\quad\forall\,t\ge t_0
$$
for arbitrarily fixed $t_0>0$. Here the constant $C>0$ is independent of $t,\bm u_0$ but depends on $t_0,K,\Om,\bm\al,\bm C$ and $\cA$. Moreover$,$ the decay rate $t^{-\al_K}$ is sharp provided that $u_0^{(K)}\not\equiv 0$ in $\Om$.
\end{thm}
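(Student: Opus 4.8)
The plan is to read off the sharp rate from the integral equation \eqref{eq-rep-u}, which under \eqref{eq-weak} becomes $\bm u=\cS(t)\bm u_0+\cQ\bm u$ with $\cQ$ as in \eqref{eq-def-Q} and $\cP\bm u=\bm C\bm u$, combined with the large-time behaviour of the Mittag-Leffler functions. First I would record the decay of the decoupled evolution: from $|E_{\al_k,1}(-x)|\le C(1+x)^{-1}$ for $x\ge0$ and the eigenfunction expansion defining $\cS_k$, one obtains $\|\cS_k(t)\psi\|_{\cD(\cA_k^\ga)}\le C\,t^{-\al_k}\|\psi\|_{L^2(\Om)}$ for $t\ge1$ and $0\le\ga\le1$. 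Since $\al_k\ge\al_K$, every component of $\cS(t)\bm u_0$ decays at least like $t^{-\al_K}$ in $\cD(\cA^\ga)\hookrightarrow H^{2\ga}(\Om)$, so the decoupled part already satisfies the asserted bound and the whole difficulty is concentrated in $\cQ\bm u$.

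Next I would remove the exponential factor $\exp(C\,t)$ of Theorem \ref{thm-well}, which would otherwise ruin any decay. This is exactly where the negative semi-definiteness of $\bm C$ enters: testing the $k$-th equation of \eqref{eq-ibvp-u1} with $u_k$, summing over $k$, and invoking $(\bm C\bm u,\bm u)\le0$, the coercivity of each $\cA_k$, and the standard convexity inequality $\tfrac12\pa_t^{\al_k}\|u_k\|_{L^2(\Om)}^2\le\rRe(\pa_t^{\al_k}u_k,u_k)$, one obtains a dissipative fractional energy estimate yielding a global-in-time a priori bound on $\|\bm u(t)\|_{L^2(\Om)}$. This validates the Picard series $\bm u=\sum_{m\ge0}\cQ^m(\cS(\,\cdot\,)\bm u_0)$ on all of $(0,\infty)$.

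The crux, and the step I expect to be the main obstacle, is to show that $\cQ\bm u$ decays no slower than $t^{-\al_K}$. A crude absolute-value estimate fails: by \eqref{eq-Sk2} the kernel satisfies $\|\cA_k^{-1}\cS_k'(z)\|=O(z^{\al_k-1})$ as $z\to0$, and integrating this against a source of size $\tau^{-\be}$ over $[t/2,t]$ produces $t^{\al_k-\be}$, i.e.\! no gain at all. The remedy is to exploit the cancellation encoded in $\int_0^z\cA_k^{-1}\cS_k'(s)\,\rd s=\cA_k^{-1}(\cS_k(z)-I)$: integrating by parts in $\tau$ turns $\cQ_k\bm u$ into the boundary term $\cA_k^{-1}(\bm C\bm u)_k(t)$ plus genuinely faster-decaying pieces (using the analyticity of $\bm u$ from Theorem \ref{thm-well} to control $\pa_\tau\bm u$). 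Rearranged, this reads $(I-\cA^{-1}\bm C)\bm u(t)=\cS(t)\bm u_0+(\text{remainder})$; since $\bm C\le0$, the operator $I-\cA^{-1}\bm C$—equivalently $\cA-\bm C$—is boundedly invertible on $(L^2(\Om))^K$, whence $\|\bm u(t)\|_{L^2(\Om)}\le C\,t^{-\al_K}\|\bm u_0\|_{L^2(\Om)}$. One then upgrades from $L^2(\Om)$ to $H^2(\Om)=\cD(\cA)$ by feeding this decay back into $\cA\bm u=-\pa_t^{\bm\al}\bm u+\bm C\bm u$ and using the smoothing of $\cS$, the restriction $t\ge t_0$ absorbing the $t\to0$ singularity of the $H^2(\Om)$ norm and accounting for the dependence of $C$ on $t_0$.

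Finally, for sharpness I would pass to the Laplace transform, under which \eqref{eq-ibvp-u1} with \eqref{eq-weak} gives $(s^{\bm\al}+\cA-\bm C)\wh{\bm u}(s)=s^{\bm\al-1}\bm u_0$, i.e.\! the $k$-th right-hand side is $s^{\al_k-1}u_0^{(k)}$. As $s\to0^+$ the most singular forcing is $s^{\al_K-1}$, because $\al_K$ is the smallest order, and matching the leading singularity gives $\wh{\bm u}(s)\sim s^{\al_K-1}\bm\eta$ with $\bm\eta=(\cA-\bm C)^{-1}\bigl(u_0^{(K)}\bm e_K\bigr)$, where $\bm e_K$ is the $K$-th coordinate vector. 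Because $\cA$ is coercive and $\bm C\le0$, the operator $\cA-\bm C$ is positive definite and injective, so $\bm\eta\ne\bm0$ whenever $u_0^{(K)}\not\equiv0$. A Tauberian theorem then yields $\bm u(t)\sim\dfrac{t^{-\al_K}}{\Ga(1-\al_K)}\bm\eta$, producing the matching lower bound $\|\bm u(t)\|_{H^2(\Om)}\ge\|\bm u(t)\|_{L^2(\Om)}\ge c\,t^{-\al_K}$ and establishing that the rate $t^{-\al_K}$ is sharp. The delicate point here is to control the remainder in the inversion uniformly in the operator-valued setting, so that it does not interfere with the non-vanishing leading coefficient $\bm\eta$.
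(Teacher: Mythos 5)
Your estimate for the decoupled part $\cS(t)\bm u_0$ is correct, and your instinct to pass to the Laplace transform for the sharpness claim is in the right direction, but the upper bound has a genuine gap at exactly the step you flag as the crux. After integrating by parts in $\cQ\bm u$ you arrive at
\[
(I-\cA^{-1}\bm C)\bm u(t)=\cS(t)\bm u_0-\cA^{-1}\cS(t)\bm C\bm u_0-\int_0^t\cA^{-1}\cS(t-\tau)\bm C\,\pa_\tau\bm u(\tau)\,\rd\tau,
\]
and assert that the integral consists of ``genuinely faster-decaying pieces.'' It does not. To estimate it you need a global-in-time decay rate for $\pa_\tau\bm u$, which Theorem \ref{thm-well} does not supply (its bounds carry the factor $\exp(C\,t)$ and its analyticity statement is qualitative); and even granting $\|\pa_\tau\bm u(\tau)\|_{L^2(\Om)}\le C\tau^{-\al_K-1}$ for large $\tau$, the contribution of $[t/2,t]$ is again of order $t^{-\al_K}$ with a constant that is not small — the same order as the quantity you are trying to bound. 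The argument is therefore circular and does not close without an additional bootstrap or smallness mechanism that you have not provided. The dissipative energy estimate in your second step only yields boundedness of $\|\bm u(t)\|_{L^2(\Om)}$, not decay, so it cannot substitute for this; and the upgrade from $L^2(\Om)$ to $H^2(\Om)$ via $\cA\bm u=-\pa_t^{\bm\al}\bm u+\bm C\bm u$ presupposes a decay estimate for $\pa_t^{\bm\al}\bm u$ that is never established.

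The paper avoids the time domain entirely. It Laplace-transforms the system into the parametrized elliptic problem \eqref{equ-lap-uv}, uses the Lax--Milgram theorem (negative semi-definiteness of $\bm C$ plus $\rRe\,s^{\al_k}>0$ giving coercivity on a sector $|\arg s|<\te$ with $\te>\pi/2$) together with elliptic regularity to prove the resolvent bound $\|\wh{\bm u}(s)\|_{H^2(\Om)}\le C\|\bm u_0\|_{L^2(\Om)}(\sum_{k,\ell}|s|^{\al_k+\al_\ell-1}+\sum_k|s|^{\al_k-1})$ and the analyticity of $\wh{\bm u}$ on that sector, and then deforms the Fourier--Mellin contour onto the rays $\arg s=\pm\te$ to read off $\|\bm u(t)\|_{H^2(\Om)}\le C\|\bm u_0\|_{L^2(\Om)}(\sum_{k,\ell}t^{-\al_k-\al_\ell}+\sum_k t^{-\al_k})$, which is $\le C\,t^{-\al_K}\|\bm u_0\|_{L^2(\Om)}$ for $t\ge t_0$. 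For sharpness the paper also works on the Laplace side but argues by contradiction: a faster decay $t^{-\al_K}f(t)$ with $f\to0$ forces $s^{1-\al_K}\wh{\bm u}(s)\to\bm 0$, and multiplying the transformed equation by $s^{1-\al_K}$ and letting $s\to0$ yields $u_0^{(K)}\equiv0$. This sidesteps the operator-valued Tauberian theorem whose justification you correctly identify as delicate and leave open. If you wish to salvage your plan, the sectorial resolvent estimate is the missing ingredient for both halves of the theorem.
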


The above theorem asserts that the decay rate of the solution $\bm u$ to \eqref{eq-ibvp-u1} is $t^{-\al_K}$ at best as $t\to+\infty$, where $\al_K$ is the lowest order of the time-fractional derivatives. This means that if $\|\bm u(\,\cdot\,,t)\|_{H^2(\Om)}=o(t^{-\al_K})$ as $t\to+\infty$, then $\bm u$ must vanish identically in $\Om\times\BR_+$, so that we call the decay rate $t^{-\al_K}$ sharp. This coincides with the result in the multi-term case considered by \cite{LLY15} in which the decay rate of the solution was shown to be dominated only by the lowest order of the fractional derivatives.

On the same direction, next we consider the following inverse problem.

\begin{prob}\label{prob-IP}
Let $\bm u$ satisfy \eqref{eq-ibvp-u1} and fix $T>0,$ $\bm x_0\in\Om,$ $k_0\in\{1,\dots,K\}$ arbitrarily. Determine the orders $\bm\al=(\al_1,\dots,\al_K)^\T$ of \eqref{eq-ibvp-u1} by the single point observation of the $k_0$-th component $u_{k_0}$ of $\bm u$ at $\{\bm x_0\}\times(0,T)$.
\end{prob}

The orders of the Caputo derivatives in time-fractional partial differential equations are related to important physical parameters describing e.g.\! the heterogeneity of media, whose determination is of great interest from both applied and mathematical aspects. Therefore, similar inverse problems have been considered intensively in the case of a single equation (see e.g.\! \cite{HNWY13,JK22,LHY20b,LIY15,LY15,LY22} and the survey \cite{LiLiuY19}). Here we attempt to extend the result from a single equation to a weakly coupled system. Since the components of the solution $\bm u$ interact each other, in Problem \ref{prob-IP} we ask the possibility of identifying all the orders $\bm\al$ by only observing a single component. The following theorem gives an affirmative answer to the uniqueness for Problem \ref{prob-IP}.

\begin{thm}\label{thm-ip}
Let $d=1,2,3,$ $\bm F=\bm 0$ and $\bm u_0\in(L^2(\Om))^K$ satisfy $u_0^{(k)}\ge0,\not\equiv 0$ in $\Om$ for $k=1,\dots,K$. Assume \eqref{eq-weak} and that $\bm C=(c_{k\ell})_{1\le k,\ell\le K}\in(L^\infty(\Om))^{K\times K}$ satisfies
\begin{gather}
c_{k\ell}\ge 0,\not\equiv 0\quad\mbox{on }\ov\Om,\ k,\ell=1,\dots,K,\ k\ne\ell,\label{eq-cond-C1}\\
\sum_{\ell=1}^K c_{k\ell}\le0\quad\mbox{on }\ov\Om,\ k=1,\dots,K.\label{eq-cond-C2}
\end{gather}
Further, let $\bm u$ and $\bm v$ be the solutions to \eqref{eq-ibvp-u1} with the fractional orders $\bm\al$ and $\bm\be,$ respectively. Then for arbitrarily fixed $T>0,$ $\bm x_0\in\Om$ and $k_0\in\{1,\dots,K\},$ we conclude that $u_{k_0}=v_{k_0}$ at $\{\bm x_0\}\times(0,T)$ implies $\bm\al=\bm\be$.
\end{thm}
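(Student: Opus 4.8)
The plan is to let the coupling carry the information of every fractional order into the single observed profile $u_{k_0}(\bm x_0,\cdot)$, and then to read the orders off its long-time asymptotics. First I would make sense of the point values and upgrade the observation to the whole half-line. Since $d\le3$ we have $\cD(\cA^\ga)\subset H^{2\ga}(\Om)\hookrightarrow C(\ov\Om)$ for every $\ga\in(d/4,1)$, and such $\ga$ exists precisely because $d<4$; as the coefficients here are $t$-independent with $\bm b_{k\ell}\equiv\bm0$, Theorem \ref{thm-well}(i) shows that $\bm u\colon(0,\infty)\to\cD(\cA^\ga)$ is analytic for $\ga\in[0,1)$. Composing with the continuous embedding into $C(\ov\Om)$, the scalar maps $t\mapsto u_{k_0}(\bm x_0,t)$ and $t\mapsto v_{k_0}(\bm x_0,t)$ are real-analytic on $(0,\infty)$, so the assumed equality on $(0,T)$ propagates by the identity theorem to $u_{k_0}(\bm x_0,t)=v_{k_0}(\bm x_0,t)$ for all $t>0$.

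Next I would take the Laplace transform $\wh{\bm u}(s)=\int_0^\infty\e^{-st}\bm u(t)\,\rd t$, which converges for $s>0$ because $u_{k_0}(\bm x_0,\cdot)$ is integrable near $0$ by \eqref{eq-est-u0} and decays at infinity as in Theorem \ref{thm-asymp}. Using the Laplace rule for the Caputo derivative, the transformed system becomes $(\diag(s^{\al_k})+\cA-\bm C)\wh{\bm u}(s)=\diag(s^{\al_k-1})\bm u_0$. Setting $\bm\phi^{(\ell)}:=(\cA-\bm C)^{-1}(u_0^{(\ell)}\bm e_\ell)$ and subtracting $\sum_\ell s^{\al_\ell-1}\bm\phi^{(\ell)}$, the residual solves the same resolvent equation with right-hand side $-\diag(s^{\al_k})\sum_\ell s^{\al_\ell-1}\bm\phi^{(\ell)}=O(s^{2\al_K-1})$, so that
\[
\wh u_{k_0}(\bm x_0,s)=\sum_{\ell=1}^K\phi_{k_0}^{(\ell)}(\bm x_0)\,s^{\al_\ell-1}+O\bigl(s^{2\al_K-1}\bigr),\quad s\to0^+,
\]
and, iterating, a complete asymptotic expansion in the powers $s^{-1+\bm n\cdot\bm\al}$, $\bm n\in(\BN\cup\{0\})^K$. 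The expansion of $\wh v_{k_0}(\bm x_0,\cdot)$ is identical but with $\bm\al$ replaced by $\bm\be$ and the \emph{same} coefficients $\phi_{k_0}^{(\ell)}(\bm x_0)$, since $\bm\phi^{(\ell)}$ depends only on $\cA,\bm C$ and $\bm u_0$, not on the orders.

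The decisive point—and the only place the coupling enters—is to show that these coefficients do not vanish at the observation point. Here I would invoke the strong maximum principle for cooperative elliptic systems: conditions \eqref{eq-cond-C1}--\eqref{eq-cond-C2} render $\cA-\bm C$ cooperative (nonnegative off-diagonal coupling), diagonally dominant and irreducible, so that its Dirichlet inverse is positivity-preserving and sends a nonnegative, nontrivial source to a solution all of whose components are strictly positive in $\Om$. Applying this to the source $u_0^{(\ell)}\bm e_\ell$, which is $\ge,\not\equiv0$ by hypothesis, yields $\phi_{k_0}^{(\ell)}(\bm x_0)>0$ for every $\ell$ and every interior $\bm x_0$. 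Thus each individual order $\al_\ell$ registers in $u_{k_0}(\bm x_0,\cdot)$ with a strictly positive weight, which is exactly what a decoupled system would fail to provide.

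Finally I would match the two expansions. By uniqueness of generalized asymptotic expansions, $\wh u_{k_0}(\bm x_0,\cdot)=\wh v_{k_0}(\bm x_0,\cdot)$ forces equality of the coefficient of every power of $s$. Processing the exponents from the most singular $s^{\al_K-1}$ upward, one notes that any combination exponent $-1+\bm n\cdot\bm\al$ with $|\bm n|\ge2$ strictly exceeds the single-order exponents of its constituents, hence its coefficient is slaved to orders already identified and cancels against the corresponding term on the $\bm\be$-side; the only genuinely new contribution at each level is a single-order term whose coefficient is a positive sum of the $\phi_{k_0}^{(\ell)}(\bm x_0)$. Strict positivity then forbids cancellation and delivers the equality of weighted atomic measures $\sum_{\ell}\phi_{k_0}^{(\ell)}(\bm x_0)\,\de_{\al_\ell}=\sum_{\ell}\phi_{k_0}^{(\ell)}(\bm x_0)\,\de_{\be_\ell}$; since both $\bm\al$ and $\bm\be$ are non-increasing and the weights are strictly positive, the strictly increasing partial sums let me recover the multiplicities block by block and conclude $\bm\al=\bm\be$. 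I expect this last step to be the main obstacle: the dominant term only reveals $\al_K$ (the rate in Theorem \ref{thm-asymp}), and squeezing out all remaining orders from one scalar profile requires both controlling the interference of the nonlinear combination terms and excluding that two distinct ordered vectors produce the same weighted measure—both of which hinge on a sufficiently strong, strict, interior maximum principle for $\cA-\bm C$.
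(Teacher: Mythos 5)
Your opening moves coincide with the paper's: time-analyticity of $t\mapsto\bm u(t)\in\cD(\cA^\ga)$ from Theorem \ref{thm-well}(i) plus the Sobolev embedding for $d\le3$ to extend the pointwise observation to all of $\BR_+$, the Laplace transform leading to the elliptic system \eqref{equ-lap-uv}, and the strong maximum principle for the cooperative operator $\cA-\bm C$ (Lemma \ref{lem-smp}) as the mechanism by which one observed component sees every source $u_0^{(\ell)}\bm e_\ell$. Where you diverge is the identification step. The paper never builds the full expansion: arguing by contradiction it takes the \emph{largest} index $k_1$ with $\al_{k_1}\ne\be_{k_1}$ (so $\al_k=\be_k$ for $k>k_1$), forms the single quotient $\bm w(s)=s\,(\wh{\bm u}-\wh{\bm v})(s)/(s^{\al_{k_1}}-s^{\be_{k_1}})$, shows $\bm w(s)\to\bm w_0$ in $(H^2(\Om))^K$ where $(\cA-\bm C)\bm w_0=\bm D\bm u_0$ with a diagonal $0$--$1$ matrix $\bm D$ whose $k_1$-th entry is $1$, and invokes Lemma \ref{lem-smp} to get $w_0^{(k_0)}(\bm x_0)>0$, contradicting $\wh u_{k_0}(\bm x_0;s)=\wh v_{k_0}(\bm x_0;s)$ for small $s$. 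This uses only the leading behaviour of the \emph{difference} and completely bypasses your combinatorial matching.

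The gap in your version sits exactly where you suspect it. When an exponent collision $\bm n\cdot\bm\al=\al_m$ with $|\bm n|\ge2$ occurs (possible, e.g.\ $\al_i+\al_j=\al_m<1$), the coefficient of $s^{\al_m-1}$ in $\wh u_{k_0}(\bm x_0;\cdot)$ is not the positive quantity $\sum_{\ell:\al_\ell=\al_m}\phi^{(\ell)}_{k_0}(\bm x_0)$ but that quantity \emph{minus} iterated-resolvent terms such as $[(\cA-\bm C)^{-1}(\phi^{(j)}_i\bm e_i)]_{k_0}(\bm x_0)$, which are themselves strictly positive by the same maximum principle; the net coefficient can vanish, so ``strict positivity forbids cancellation'' is not available termwise, and the clean measure identity $\sum_\ell\phi^{(\ell)}_{k_0}(\bm x_0)\de_{\al_\ell}=\sum_\ell\phi^{(\ell)}_{k_0}(\bm x_0)\de_{\be_\ell}$ does not drop out of coefficient matching as stated. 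To repair it you must run an induction from the most singular exponent upward whose hypothesis is \emph{index-wise} equality $\al_\ell=\be_\ell$ on the already-processed tail (obtainable at each stage from your strictly-increasing-partial-sums argument, since level sets of a non-increasing sequence are index intervals): the combination contributions at the current exponent involve only constituents strictly below it and their coefficients depend on the \emph{indices}, not merely the values, of those constituents, so only index-wise tail equality makes them literally coincide and cancel between the two sides, leaving the positive single-order blocks to compare. With that induction spelled out, and with the finite-iteration remainder control needed to justify the expansion up to order $s^{\al_1-1}$, your route closes — but it is considerably heavier than the paper's one-quotient contradiction.
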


In the above theorem, we additionally assume that the dimension $d=1,2,3$ since we need the Sobolev embedding $H^2(\Om)\subset C(\ov\Om)$ in the proof. Theorem $\ref{thm-ip}$ also holds true for $d>3$ provided that the coefficients in $\cA,\bm C$ and the initial value $\bm u_0$ are sufficiently smooth, but here we omit the details. 
Moreover, we interpret a function $f\in L^2(\Om)$ satisfying $f\ge 0,\not\equiv 0$ as $f\ge0$ in $\Om$ and the measure of $\{\bm x\in\Om\mid f(\bm x)>0\}$ is not zero.

\begin{rem}\label{rem-ip2}
{\rm The conditions \eqref{eq-cond-C1}--\eqref{eq-cond-C2} imposed on the matrix $\bm C$ are sufficient but not necessary for Theorem \ref{thm-ip}. Such conditions are understood as cooperativeness in a coupled system. Actually, one can find a similar condition
\begin{equation}\label{eq-cond-C1'}
c_{k\ell}\ge0\quad\mbox{on }\ov\Om,\ k,\ell=1,\dots,K,\ k\ne\ell
\end{equation}
and \eqref{eq-cond-C2} in many other papers (e.g.\! \cite{FM90,S89}) which are known as classical sufficient conditions for the maximum principle for weakly coupled elliptic systems. However, we need a slightly stronger condition \eqref{eq-cond-C1} than \eqref{eq-cond-C1'} in Theorem \ref{thm-ip} because we attempt to determine all the orders by only one component measurement of the solution, which requires that the system \eqref{eq-ibvp-u1} should not be decoupled (i.e.\! the decoupled case $c_{k\ell}\equiv 0$ for all $k\ne\ell$ should be excluded). On the other hand, it is readily seen from the proof that under the weaker assumption \eqref{eq-cond-C1'} (i.e. the decoupled case is allowed), one can obtain the uniqueness of determining the orders by observing all the components of $\bm u$. Moreover, as we state in the following corollary, it is possible to choose different observation point for each component, that is, we can observe the $k$-th component $u_k$ at $\bm x_0^{(k)}$ for each $k$.}
\end{rem}

\begin{coro}
Under the same assumptions on $d,\bm F,\bm u_0$ as those in Theorem $\ref{thm-ip},$ assume \eqref{eq-weak} and that $\bm C=(c_{k\ell})_{1\le k,\ell\le K}\in(L^\infty(\Om))^{K\times K}$ satisfies the conditions \eqref{eq-cond-C2} and \eqref{eq-cond-C1'}. Further$,$ let $\bm u$ and $\bm v$ be the solutions to \eqref{eq-ibvp-u1} with the fractional orders $\bm\al$ and $\bm\be,$ respectively. 
Then for arbitrarily fixed $T>0$ and $\bm x_0^{(1)},\dots, \bm x_0^{(K)}\in\Om,$ we conclude that $u_k(\bm x_0^{(k)},t)=v_k(\bm x_0^{(k)},t)$ for all $k=1,\dots,K$ and $t\in (0,T)$ implies $\bm\al=\bm\be$.
\end{coro}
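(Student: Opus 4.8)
The plan is to follow the proof of Theorem \ref{thm-ip} almost verbatim; the only structural change is that the strong coupling condition \eqref{eq-cond-C1}, used there to activate the strong maximum principle for an \emph{irreducible} cooperative system, is now replaced by the observation of \emph{all} components together with the weaker hypothesis \eqref{eq-cond-C1'}. Throughout write $\mathcal L:=\cA-\bm C$ for the associated coupled elliptic operator, so that $(\mathcal L\bm\psi)_k=\cA_k\psi_k-\sum_\ell c_{k\ell}\psi_\ell$ and $\cP\bm u=\bm C\bm u$ under \eqref{eq-weak}.

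First I would upgrade the local-in-time data to global-in-time data. By Theorem \ref{thm-well}(i) the maps $\bm u,\bm v:(0,\infty)\to\cD(\cA^\ga)$ are analytic for every $\ga\in[0,1)$; fixing $\ga\in[0,1)$ with $2\ga>d/2$ (possible since $d\le3$) gives $\cD(\cA^\ga)\subset H^{2\ga}(\Om)\hookrightarrow C(\ov\Om)$, so that point evaluation at $\bm x_0$ is continuous and $t\mapsto\bm u(\bm x_0,t),\,t\mapsto\bm v(\bm x_0,t)$ are real-analytic on $(0,\infty)$. The assumed coincidence on $(0,T)$ then propagates to all $t>0$ by the identity theorem. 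Since both functions decay polynomially as $t\to+\infty$ (Theorem \ref{thm-asymp}) and have at most the integrable singularity $t^{-\al_1\ga}$ as $t\to0$ (by \eqref{eq-est-u0}), their Laplace transforms converge for $s>0$ and satisfy $\widehat{\bm u}(\bm x_0,s)=\widehat{\bm v}(\bm x_0,s)$ for all $s>0$.

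Next I would pass to the Laplace-transformed system. Using $\widehat{\pa_t^{\al_k}u_k}(s)=s^{\al_k}\widehat{u_k}-s^{\al_k-1}u_0^{(k)}$, the equation becomes $(\mathcal L+\diag(s^{\al_k}))\widehat{\bm u}=\diag(s^{\al_k-1})\bm u_0$, and expanding the resolvent by a Neumann series (convergent for small $s$, as $\diag(s^{\al_k})\to\bm0$) yields
\[
\widehat{\bm u}(\,\cdot\,,s)=\Bigl(\mathcal L+\diag(s^{\al_k})\Bigr)^{-1}\diag(s^{\al_k-1})\bm u_0=\sum_{k=1}^K s^{\al_k-1}\bm\phi_k+R_{\bm\al}(s),
\]
where $\bm\phi_k:=\mathcal L^{-1}(u_0^{(k)}\bm e_k)$ and the remainder $R_{\bm\al}(s)$ collects only powers $s^{p}$ with $p\ge 2\al_K-1>\al_K-1$. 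The key positivity input comes from the maximum principle for cooperative elliptic systems (cf.\ \cite{FM90,S89}): under \eqref{eq-cond-C2} and \eqref{eq-cond-C1'} one has $\bm\phi_k\ge\bm0$ componentwise in $\Om$, and then the $k$-th scalar equation $\cA_k[\bm\phi_k]_k-c_{kk}[\bm\phi_k]_k=u_0^{(k)}+\sum_{\ell\ne k}c_{k\ell}[\bm\phi_k]_\ell\ge,\not\equiv0$ (note $c_{kk}\le0$ by \eqref{eq-cond-C2} and \eqref{eq-cond-C1'}) gives $[\bm\phi_k]_k>0$ in $\Om$ by the scalar strong maximum principle; in particular $[\bm\phi_k]_k(\bm x_0)>0$. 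This is exactly where observing all components compensates for dropping the irreducibility hypothesis \eqref{eq-cond-C1}: whereas Theorem \ref{thm-ip} needed $[\bm\phi_k]_{k_0}(\bm x_0)>0$ for the single index $k_0$, here each diagonal value $[\bm\phi_k]_k(\bm x_0)$ is positive.

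Finally I would recover the orders by matching the two expansions of $\widehat{\bm u}(\bm x_0,\cdot)=\widehat{\bm v}(\bm x_0,\cdot)$ as $s\to0^+$. The most negative power present is $s^{\al_K-1}$ (resp.\ $s^{\be_K-1}$), with coefficient vector $\sum_{m:\al_m=\al_K}\bm\phi_m(\bm x_0)$ whose $K$-th entry is at least $[\bm\phi_K]_K(\bm x_0)>0$; multiplying by $s^{1-\al_K}$ and letting $s\to0$ therefore forces $\al_K=\be_K$ and equality of these leading coefficient vectors. Subtracting the common leading term—and noting that every remainder contribution built solely from the already-matched orders is identical on both sides and cancels in $\widehat{\bm u}-\widehat{\bm v}=0$—exposes the next order, and one proceeds inductively from the smallest order upward. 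Since all coefficient vectors are nonnegative with a strictly positive diagonal entry, no cancellation can annihilate a genuine power, so the sorted tuples coincide and $\bm\al=\bm\be$. The main obstacle is precisely this inductive bookkeeping: the exponents generated by the Neumann series can resonate (for instance when some $\al_i+\al_j$ equals another order), and repeated orders carry multiplicities, so one must verify that these coincidences cannot conspire to cancel a term—this is where the componentwise positivity of the $\bm\phi_k(\bm x_0)$ supplied by the maximum principle is indispensable.
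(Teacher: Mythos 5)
Your strategy is sound but it is a genuinely different organization from the paper's. The paper (see Remark \ref{rem-ip2}) intends the corollary to be proved by rerunning the proof of Theorem \ref{thm-ip}: with $k_1$ the largest index where $\al_{k_1}\ne\be_{k_1}$ and, say, $\al_{k_1}<\be_{k_1}$, one forms the rescaled difference $\bm w(s)=s(\wh{\bm u}-\wh{\bm v})(s)/(s^{\al_{k_1}}-s^{\be_{k_1}})$, shows by elliptic estimates that $\bm w(s)\to\bm w_0=(\cA-\bm C)^{-1}(\bm D\bm u_0)$ in $(H^2(\Om))^K$ as $s\to0$, and uses the weak maximum principle of \cite{FM90} plus the scalar strong maximum principle applied to the $k_1$-th row to get $w_0^{(k_1)}(\bm x_0)>0$, contradicting $u_{k_1}(\bm x_0,\cdot)=v_{k_1}(\bm x_0,\cdot)$; this is precisely where observing all components substitutes for the irreducibility condition \eqref{eq-cond-C1}. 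Your positivity input is the same ingredient (indeed $\bm w_0=\sum_{k:\,d_{kk}=1}\bm\phi_k$), but you replace the rescaled-difference limit by a Neumann expansion of the resolvent and asymptotic matching of exponents.

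The one genuine gap is in your inductive matching step, and the fix you propose is the wrong mechanism. The vectors $\bm\phi_k(\bm x_0)$ are nonnegative, but the Neumann remainder consists of terms $(-1)^n(\mathcal L^{-1}\diag(s^{\al_k}))^n\mathcal L^{-1}\diag(s^{\al_k-1})\bm u_0$ with alternating signs, so positivity alone cannot rule out that a resonance such as $\al_{k_1}=\al_i+\al_j$ cancels the coefficient of $s^{\al_{k_1}-1}$. The correct resolution is an exponent count on the \emph{difference}: every remainder term built exclusively from indices $>k_1$ (already-matched orders) cancels identically in $\wh{\bm u}-\wh{\bm v}$, while every surviving remainder term contains at least one index $\le k_1$ \emph{and} at least one further factor, hence carries an exponent at least $\al_{k_1}+\al_K-1>\al_{k_1}-1$ (resp.\ $\be_{k_1}+\al_K-1$ on the $\wh{\bm v}$ side). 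Consequently $s^{1-\al_{k_1}}(\wh{\bm u}-\wh{\bm v})(\bm x_0,s)\to\sum_{m\le k_1,\,\al_m=\al_{k_1}}\bm\phi_m(\bm x_0)$, whose $k_1$-th entry is at least $[\bm\phi_{k_1}]_{k_1}(\bm x_0)>0$; no resonance analysis is needed and the whole induction collapses to this single step. With that repair your argument closes. Two smaller caveats apply equally to either route: the invertibility of $\cA-\bm C$ and the extension of $\wh{\bm u},\wh{\bm v}$ to all $s>0$ rest on Lemma \ref{lem-analy-lap-uv}, which uses negative semi-definiteness of $\bm C$ rather than \eqref{eq-cond-C1'}--\eqref{eq-cond-C2}, and your appeal to Theorem \ref{thm-asymp} for decay presupposes the same hypothesis.
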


\section{Proof of Theorem \ref{thm-well}}\label{sec-well}

In the integral equation \eqref{eq-rep-u}, it is readily seen that $\bm w$ relies only on $\bm u_0$ and $\bm F$, which is well analyzed in view of Lemma \ref{lem-ibvp-v}. On the other hand, the term $\cQ\bm u$ involves the solution $\bm u$ itself, which is more essential in the discussion of the unique existence of the mild solution. To this end, we should first investigate the operator $\cQ$.

\begin{lem}\label{lem-est-Q}
Let $j=0$ in the condition \eqref{eq-reg-bc} and $1/2\le\ga<1,$ $p\in[1,+\infty]$ and
\[
\cQ:L^p(0,T;\cD(\cA^\ga))\longrightarrow L^p(0,T;\cD(\cA^\ga))
\]
be the linear operator defined by \eqref{eq-def-Q}. Then for any $m\in\BN$ and a.e.\! $t\in(0,T),$ there holds
\begin{equation}\label{eq-est-Q}
\|\cQ^m\bm v(t)\|_{\cD(\cA^\ga)}\le M^m\left(\sum_{k=1}^K J^{\al_k(1-\ga)}\right)^m\|\bm v(t)\|_{\cD(\cA^\ga)}
\end{equation}
for $\bm v\in L^p(0,T;\cD(\cA^\ga)),$ where
\begin{align}
& M:=C_1L_0\max_{1\le k\le K}\Ga(\al_k(1-\ga))=C_1L_0\,\Ga(\al_K(1-\ga)),\label{eq-def-Mj}\\
& L_0:=\max\left\{1,\sum_{k,\ell=1}^K\left(\|\bm b_{k\ell}\|_{L^\infty(\Om\times(0,T))}+\|c_{k\ell}\|_{L^\infty(\Om\times(0,T))}\right)\right\}.\nonumber
\end{align}
Here $C_1$ is the constant in Lemma $\ref{lem-Sk}$.
\end{lem}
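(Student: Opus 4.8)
The plan is to prove the base case $m=1$ by a single convolution estimate and then bootstrap to arbitrary $m$ by induction, the crucial structural feature being the positivity of the fractional integral kernel. Throughout I would abbreviate $g(\tau):=\|\bm v(\tau)\|_{\cD(\cA^\ga)}$ and denote by $J^\be$ the Riemann--Liouville fractional integral $J^\be f(t)=\f1{\Ga(\be)}\int_0^t(t-\tau)^{\be-1}f(\tau)\,\rd\tau$, so that the operator on the right-hand side of \eqref{eq-est-Q} acts on the scalar function $g$ and is evaluated at $t$.

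For the base case I would argue component-wise. Since $\cA_k^\ga\cA_k^{-1}=\cA_k^{\ga-1}$, applying $\cA_k^\ga$ to the $k$-th component of $\cQ\bm v$ and invoking the second estimate \eqref{eq-Sk2} of Lemma \ref{lem-Sk} with exponent $\al_k(1-\ga)-1$ gives $\|(\cQ\bm v)_k(t)\|_{\cD(\cA_k^\ga)}\le C_1\int_0^t(t-\tau)^{\al_k(1-\ga)-1}\|\cP_k\bm v(\tau)\|_{L^2(\Om)}\,\rd\tau$. The next step is to bound the source $\cP_k\bm v$ in $L^2(\Om)$ by $g$: because $\ga\ge1/2$ we have the continuous embeddings $\cD(\cA_\ell^\ga)\hookrightarrow\cD(\cA_\ell^{1/2})=H_0^1(\Om)\hookrightarrow L^2(\Om)$, so both $\|\nb v_\ell\|_{L^2(\Om)}$ and $\|v_\ell\|_{L^2(\Om)}$ are dominated by $\|v_\ell\|_{\cD(\cA_\ell^\ga)}$; combined with the definition of $\cP_k$ and the coefficient bounds collected in $L_0$ this yields $\|\cP_k\bm v(\tau)\|_{L^2(\Om)}\le L_0\,g(\tau)$, where the embedding constant (depending only on $\Om,\cA$) is absorbed into $C_1$. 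Recognizing the resulting convolution as $\Ga(\al_k(1-\ga))\,J^{\al_k(1-\ga)}g$, I get $\|(\cQ\bm v)_k(t)\|_{\cD(\cA_k^\ga)}\le C_1L_0\,\Ga(\al_k(1-\ga))\,J^{\al_k(1-\ga)}g(t)$. Since $\al_1\ge\cdots\ge\al_K$ and $\tfrac12\le\ga<1$ force every argument $\al_k(1-\ga)$ into $(0,1)$, where $\Ga$ is decreasing, we have $\Ga(\al_k(1-\ga))\le\Ga(\al_K(1-\ga))$, i.e. the constant $M$ in \eqref{eq-def-Mj} dominates $C_1L_0\Ga(\al_k(1-\ga))$ for each $k$. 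Summing the $K$ components via the definition of the product norm produces exactly the case $m=1$, namely $\|\cQ\bm v(t)\|_{\cD(\cA^\ga)}\le M\big(\sum_k J^{\al_k(1-\ga)}\big)g(t)$.

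Before iterating I would record two facts. First, since each kernel $t^{\be-1}/\Ga(\be)$ lies in $L^1(0,T)$ for $\be>0$, Young's convolution inequality makes $J^{\al_k(1-\ga)}$ bounded on $L^p(0,T)$; together with the $m=1$ estimate this shows $\cQ$ is bounded on $L^p(0,T;\cD(\cA^\ga))$, so every iterate $\cQ^m\bm v$ is well defined. Second, each $J^\be$ is positivity preserving and hence monotone: $0\le h_1\le h_2$ a.e. implies $J^\be h_1\le J^\be h_2$. The induction then applies the $m=1$ estimate to $\cQ^m\bm v$ in place of $\bm v$, giving $\|\cQ^{m+1}\bm v(t)\|_{\cD(\cA^\ga)}\le M\sum_k J^{\al_k(1-\ga)}\|\cQ^m\bm v(\cdot)\|_{\cD(\cA^\ga)}(t)$, and inserts the inductive bound $\|\cQ^m\bm v(\tau)\|_{\cD(\cA^\ga)}\le M^m\big(\sum_{k'}J^{\al_{k'}(1-\ga)}\big)^m g(\tau)$ underneath the outer fractional integrals — which is legitimate precisely by the monotonicity just noted — collapsing into $M^{m+1}\big(\sum_k J^{\al_k(1-\ga)}\big)^{m+1}g(t)$, as desired.

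The main obstacle I anticipate is not the iteration itself, which is clean once the fractional-integral formalism is in place, but the very first reduction: controlling the first-order coupling $\bm b_{k\ell}\cdot\nb v_\ell$ in $L^2(\Om)$ by $\|\bm v\|_{\cD(\cA^\ga)}$ forces the restriction $\ga\ge1/2$, since the gradient demands $H^1$-regularity, i.e. membership in $\cD(\cA^{1/2})$. A secondary point requiring care is that the composition of the outer and inner fractional integrals in the inductive step is justified only through the sign of the kernel; without this positivity one could not substitute the inductive estimate inside $J^{\al_k(1-\ga)}$ and recover the clean $m$-th power structure.
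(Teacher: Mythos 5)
Your proposal is correct and follows essentially the same route as the paper: the base case via the resolvent estimate \eqref{eq-Sk2} applied componentwise, recognition of the resulting convolution as $\Ga(\al_k(1-\ga))J^{\al_k(1-\ga)}$, the identification $\max_k\Ga(\al_k(1-\ga))=\Ga(\al_K(1-\ga))$ from the monotonicity of $\Ga$ on $(0,1)$, and then induction using the positivity of the fractional-integral kernels. If anything, you are slightly more explicit than the paper on two points it leaves implicit, namely the embedding $\cD(\cA^\ga)\hookrightarrow H_0^1(\Om)$ behind the bound $\|\cP_k\bm v\|_{L^2(\Om)}\le L_0\|\bm v\|_{\cD(\cA^\ga)}$ (which the paper writes, a bit loosely, as an equality) and the monotonicity of $J^\be$ that licenses substituting the inductive bound under the outer integrals.
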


\begin{proof}
Let us show \eqref{eq-est-Q} by induction and first deal with the case of $m=1$. Noticing that $\cP$ is a first-order differential operator with $L^\infty$ coefficients, we utilize \eqref{eq-Sk2} in Lemma \ref{lem-Sk} to estimate for a.e.\! $t\in(0,T)$ that
\begin{align*}
\|\cQ\bm v(t)\|_{\cD(\cA^\ga)} & =\sum_{k=1}^K\left\|\int_0^t\cA_k^{-1}\cS_k'(t-\tau)(\cP_k\bm v)(\tau)\,\rd\tau\right\|_{\cD(\cA_k^\ga)}\\
& \le\sum_{k=1}^K\int_0^t\left\|\cA_k^{\ga-1}\cS_k'(t-\tau)\left(\cP_k\bm v\right)(\tau)\right\|_{L^2(\Om)}\rd\tau\\
& \le C_1\sum_{k=1}^K\int_0^t\left\|\left(\cP_k\bm v\right)(\tau)\right\|_{L^2(\Om)}(t-\tau)^{\al_k(1-\ga)-1}\,\rd\tau\\
& =C_1L_0\sum_{k=1}^K\int_0^t\|\bm v(\tau)\|_{\cD(\cA^\ga)}(t-\tau)^{\al_k(1-\ga)-1}\,\rd\tau.
\end{align*}
By the definition of the Riemann-Liouville fractional integral, we further derive
\begin{align}
\|\cQ\bm v(t)\|_{\cD(\cA^\ga)} & \le C_1L_0\sum_{k=1}^K\Ga(\al_k(1-\ga))J^{\al_k(1-\ga)}\left(\|\bm v(t)\|_{\cD(\cA^\ga)}\right)\nonumber\\
& \le C_1L_0\max_{1\le k\le K}\Ga(\al_k(1-\ga))\sum_{k=1}^K J^{\al_k(1-\ga)}\left(\|\bm v(t)\|_{\cD(\cA^\ga)}\right).\label{esti-Qu}
\end{align}
This verifies \eqref{eq-est-Q} for $m=1$ by the definition \eqref{eq-def-Mj} of $M$. For $m\ge 2$, assume that \eqref{eq-est-Q} holds for $\ell=1,\dots,m-1$. Then for a.e.\! $t\in(0,T)$, we employ the inductive assumption to estimate
\begin{align*}
\|\cQ^m\bm v(t)\|_{\cD(\cA^\ga)} & =\|\cQ^{m-1}(\cQ\bm v)(t)\|_{\cD(\cA^\ga)}\\
& \le M^{m-1}\left(\sum_{k=1}^K J^{\al_k(1-\ga)}\right)^{m-1}\left(\|\cQ\bm v(t)\|_{\cD(\cA^\ga)}\right)\\
& \le M^{m-1}\left(\sum_{k=1}^K J^{\al_k(1-\ga)}\right)^{m-1}\left(M\sum_{k=1}^K J^{\al_k(1-\ga)}\|\bm v(t)\|_{\cD(\cA^\ga)}\right)\\
& =M^m\left(\sum_{k=1}^K J^{\al_k(1-\ga)}\right)^m\|\bm v(t)\|_{\cD(\cA^\ga)}.
\end{align*}
Here the last inequality is due to the estimate \eqref{esti-Qu}. This indicates that \eqref{eq-est-Q} holds for any $m\in\BN$, which completes the proof.
\end{proof}

On the basis of \eqref{eq-rep-u}, we can construct a sequence $\{\bm u^{(m)}\}_{m=0}^\infty$ iteratively by
\begin{equation}\label{eq-iteration}
\bm u^{(m)}(t):=\begin{cases}
\bm 0, & m=0,\\
\bm w(t) + \cQ\bm u^{(m-1)}(t), & m=1,2,\dots.
\end{cases}
\end{equation}
Our strategy is to show the convergence of $\{\bm u^{(m)}\}$ in some function spaces, whose limit is indeed a mild solution of \eqref{eq-ibvp-u1}. For proving the time-analyticity of the solution, it is convenient to consider the above iteration \eqref{eq-iteration} in the complex plane. To this end, we first change variables and rewrite \eqref{eq-iteration} as
$$
\bm u^{(m)}(t)=\bm w(t)-t\int_0^1\cA^{-1}\cS'(\tau t)(\cP \bm u^{(m-1)}((1-\tau)t))\,\rd\tau.
$$
Moreover, we extend the variable $t$ from $(0,T)$ to the sector $S:=\{z\in\BC\setminus\{0\}\mid|\arg z|<\pi/2\}$ to obtain
\begin{align*}
\bm u^{(m)}(z) & =\bm w(z)-z\int_0^1\cA^{-1}\cS'(\tau z)(\cP \bm u^{(m-1)}((1-\tau)z))\,\rd\tau\\
& =:\bm w(z)+\cQ \bm u^{(m-1)}(z),\quad m=1,2,\dots.
\end{align*}

To proceed, we shall give some estimates for $\bm u^{(m)}$ in appropriate spaces.

\begin{lem}\label{lem-est-um}
Under the assumptions in Theorem $\ref{thm-well},$ define a sequence $\{\bm u^{(m)}\}_{m=0}^\infty$ by \eqref{eq-iteration}. Let $C_1$ and $C_2$ be the constants in Lemmas $\ref{lem-Sk}$ and $\ref{lem-ibvp-v},$ respectively, and $M$ be defined by \eqref{eq-def-Mj}.

{\rm(i)} If $\bm F\equiv\bm 0,$ then for $m=0,1,\dots$ and $1/2\le\ga<1,$ there holds
\begin{equation}\label{eq-est-um1}
\|(\bm u^{(m+1)}-\bm u^{(m)})(z)\|_{\cD(\cA^\ga)}\le C_3M^{m+1}\|\bm u_0\|_{L^2(\Om)}\sum_{|\bm j|=m}\f{m!}{j_1!\cdots j_K!}\f{|z|^{\bm\be\cdot\bm j-\al_1\ga}}{\Ga(\bm\be\cdot\bm j+1-\al_1\ga)}
\end{equation}
for $z\in S$ satisfying $|z|\le T,$ where $\be_k:=\al_k(1-\ga),$ $\bm\be=(\be_1,\dots,\be_K)^\T,$ $\bm j=(j_1,\dots,j_K)^\T\in(\BN\cup\{0\})^K$ and
\[
C_3:=C_1\max_{1\le k\le K} T^{(\al_1-\al_k)\ga}.
\]

{\rm(ii)} If $\bm u_0\equiv\bm 0,$ let $\ga=1$ for $p=2$ and $0\le\ga<1$ for $p\ne2,$ then for $m=0,1,\dots,$ there holds
\begin{equation}\label{eq-est-um2}
\|\bm u^{(m+1)}-\bm u^{(m)}\|_{L^p(0,T;\cD(\cA^\ga))}\le C_4(M K)^m\|\bm F\|_{L^p(0,T;L^2(\Om))}\f{\max\{1,T^{\ov\be m}\}}{\Ga(\un\be m+1)},
\end{equation}
where $\ov\be:=\max\{\be_1,\dots,\be_K\},$ $\un\be:=\min\{\be_1,\dots,\be_K\}$ and 
$$
C_4 := C_2\left(\min_{y\ge0}\Ga(1+y)\right)^{-1}.
$$
\end{lem}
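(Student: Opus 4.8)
The plan is to reduce both parts to a single estimate on $\cQ^m\bm w$. Since the iteration \eqref{eq-iteration} is affine in $\bm u$ with linear part $\cQ$, the consecutive differences satisfy $\bm u^{(m+1)}-\bm u^{(m)}=\cQ(\bm u^{(m)}-\bm u^{(m-1)})$ for $m\ge1$, while $\bm u^{(1)}-\bm u^{(0)}=\bm w$; by induction this gives $\bm u^{(m+1)}-\bm u^{(m)}=\cQ^m\bm w$ (in the complex variable $z\in S$ as well, using the rewritten form of $\cQ$). Thus everything hinges on iterating Lemma \ref{lem-est-Q}. The crucial point is that the Riemann--Liouville integrals commute and obey the semigroup law $J^aJ^b=J^{a+b}$, so the $m$-th power in \eqref{eq-est-Q} expands by the multinomial theorem as $\bigl(\sum_{k=1}^K J^{\be_k}\bigr)^m=\sum_{|\bm j|=m}\f{m!}{j_1!\cdots j_K!}J^{\bm\be\cdot\bm j}$ with $\be_k=\al_k(1-\ga)$. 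Combined with Lemma \ref{lem-est-Q}, this yields, for a.e.\ $t$,
\[
\|\cQ^m\bm v(t)\|_{\cD(\cA^\ga)}\le M^m\sum_{|\bm j|=m}\f{m!}{j_1!\cdots j_K!}\,J^{\bm\be\cdot\bm j}\bigl(\|\bm v(\cdot)\|_{\cD(\cA^\ga)}\bigr)(t),
\]
which already produces the multi-index structure appearing in both claimed estimates.

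For (i) I would take $\bm v=\bm w=\cS(\cdot)\bm u_0$ (as $\bm F\equiv\bm0$) and first collapse the profile: by \eqref{eq-est-v1}, $\|\bm w(z)\|_{\cD(\cA^\ga)}\le C_1\sum_k\|u_0^{(k)}\|_{L^2(\Om)}|z|^{-\al_k\ga}$, and since $|z|\le T$ and $\al_1\ge\al_k$ one has $|z|^{-\al_k\ga}\le T^{(\al_1-\al_k)\ga}|z|^{-\al_1\ga}$, so $\|\bm w(z)\|_{\cD(\cA^\ga)}\le C_3\|\bm u_0\|_{L^2(\Om)}|z|^{-\al_1\ga}$. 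Because \eqref{eq-Sk2} holds on the whole sector $S$, the same computation as in Lemma \ref{lem-est-Q}, after taking moduli and changing variables $\tau\mapsto\tau z$, reduces the complex iteration to the real-variable fractional integral acting on the radial profile $r\mapsto C_3\|\bm u_0\|r^{-\al_1\ga}$, so only $|z|$ enters the final bound. Applying the power rule $J^\mu r^{-\al_1\ga}=\f{\Ga(1-\al_1\ga)}{\Ga(\mu+1-\al_1\ga)}r^{\mu-\al_1\ga}$ (legitimate since $\al_1\ga<1$ for $\ga<1$) with $\mu=\bm\be\cdot\bm j$ gives exactly the summand $\f{|z|^{\bm\be\cdot\bm j-\al_1\ga}}{\Ga(\bm\be\cdot\bm j+1-\al_1\ga)}$; the bounded residual factor $\Ga(1-\al_1\ga)$ is absorbed into the prefactor, which is the source of the extra power of $M$ in $M^{m+1}$.

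For (ii) I take the same $\bm v=\bm w$, now the Duhamel term driven by $\bm F$, and use \eqref{eq-est-v2} to get $\|\bm w\|_{L^p(0,T;\cD(\cA^\ga))}\le C_2\|\bm F\|_{L^p(0,T;L^2(\Om))}$. Taking the $L^p(0,T)$ norm of the pointwise bound above and applying Young's convolution inequality to each factor, $\|J^\mu g\|_{L^p(0,T)}\le\f{T^\mu}{\Ga(\mu+1)}\|g\|_{L^p(0,T)}$, reduces the estimate to controlling $\sum_{|\bm j|=m}\f{m!}{j_1!\cdots j_K!}\f{T^{\bm\be\cdot\bm j}}{\Ga(\bm\be\cdot\bm j+1)}$. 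Here I bound the number of terms by $\sum_{|\bm j|=m}\f{m!}{j_1!\cdots j_K!}=K^m$, estimate $T^{\bm\be\cdot\bm j}\le\max\{1,T^{\ov\be m}\}$ using $\un\be m\le\bm\be\cdot\bm j\le\ov\be m$, and handle the Gamma denominators via the uniform lower bound $\Ga(\bm\be\cdot\bm j+1)\ge(\min_{y\ge0}\Ga(1+y))\,\Ga(\un\be m+1)$, which follows from $\bm\be\cdot\bm j\ge\un\be m$ together with the shape of $\Ga$ on $[0,\infty)$. Collecting the constants into $C_4$ yields \eqref{eq-est-um2}.

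The routine parts are the telescoping and the convolution/power-rule computations; the genuinely delicate steps are two. First, in (i) one must justify that the complex iteration on $S$ is governed by the same constants as the real one, which is exactly where the sectorial estimate \eqref{eq-Sk2} is essential and where care is needed so that only $|z|$ survives. Second, and more importantly, in (ii) the factorial gain $1/\Ga(\un\be m+1)$ must be extracted uniformly in $m$ from the per-index denominators $\Ga(\bm\be\cdot\bm j+1)$; since $\Ga$ is not monotone on $(0,\infty)$, this forces the $\min_{y\ge0}\Ga(1+y)$ correction built into $C_4$. This factorial decay is what will ultimately make $\sum_m\|\bm u^{(m+1)}-\bm u^{(m)}\|$ convergent (dominated by a Mittag-Leffler-type series), so it is the heart of the argument despite its technical appearance.
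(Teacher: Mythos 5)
Your proposal is correct and follows essentially the same route as the paper: telescoping to $\cQ^m\bm w$, the complex-sector version of the $\cQ$-estimate reducing to real Riemann--Liouville integrals of the radial profile, the multinomial expansion of $\bigl(\sum_{k=1}^K J^{\be_k}\bigr)^m$, the power rule for $J^\mu$ acting on $|z|^{-\al_1\ga}$ in (i), and Young's inequality plus the uniform lower bound on the Gamma ratios in (ii). The only cosmetic differences are that the paper runs a direct induction on $m$ in (i) (absorbing the residual $\Ga(1-\al_1\ga)$ into $M$ at the base step rather than at the end, as you do) and justifies the Gamma-ratio bound via Stirling's formula instead of your monotonicity argument; both yield the same constants.
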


\begin{proof}
(i) If $\bm F\equiv\bm 0$, then $\bm w(z)=\cS(z)\bm u_0$. Taking advantage of the estimate \eqref{eq-est-v1} in Lemma \ref{lem-ibvp-v}, we obtain
\begin{align*}
\|\bm w(z)\|_{\cD(\cA^\ga)} & \le C_1\sum_{k=1}^K\|u_0^{(k)}\|_{L^2(\Om)}|z|^{-\al_k\ga}\le C_1|z|^{-\al_1\ga}\sum_{k=1}^K T^{(\al_1-\al_k)\ga}\|u_0^{(k)}\|_{L^2(\Om)}\\
& =C_3|z|^{-\al_1\ga}\|\bm u_0\|_{L^2(\Om)},
\end{align*}
which is exactly \eqref{eq-est-um1} for $m=0$. 
Next, for any $m\in\BN$, using a method similar to estimating $\cQ\bm v(t)$ in Lemma \ref{lem-est-Q}, we can get
\begin{align*}
\|\cQ\bm v(z)\|_{\cD(\cA^\ga)} & =|z|\sum_{k=1}^K\left\|\int_0^1 \cA_k^{-1}\cS_k'(\tau z)(\cP_k\bm v)((1-\tau)z)\,\rd\tau\right\|_{\cD(\cA_k^\ga)}\\
& \le|z|\sum_{k=1}^K\int_0^1\left\|\cA_k^{\ga-1}\cS_k'(\tau z)\left(\cP_k\bm v\right)((1-\tau)z)\right\|_{L^2(\Om)}\rd\tau\\
& \le C_1\sum_{k=1}^K|z|^{\al_k(1-\ga)}\int_0^1\|(\cP_k\bm v)(\tau z)\|_{L^2(\Om)}(1-\tau)^{\al_k(1-\ga)-1}\,\rd\tau\\
& \le C_1L_0\sum_{k=1}^K\int_0^{|z|}\|\bm v(\tau z/|z|)\|_{\cD(\cA^\ga)}(|z|-\tau)^{\al_k(1-\ga)-1}\,\rd\tau,
\end{align*}
which, combined with the inductive assumption, implies that
\begin{align*}
& \quad\,\,\left\|(\bm u^{(m+1)}-\bm u^{(m)})(z)\right\|_{\cD(\cA^\ga)}\le\left\|\cQ(\bm u^{(m)}-\bm u^{(m-1)})(z)\right\|_{\cD(\cA^\ga)}\\
& \le C_1L_0\sum_{k=1}^K\int_0^{|z|}\left\|(\bm u^{(m)}-\bm u^{(m-1)})(\tau z/|z|)\right\|_{\cD(\cA^\ga)}(|z|-\tau)^{\al_k(1-\ga)-1}\,\rd\tau\\
& \le C_1L_0C_3M^m\|\bm u_0\|_{L^2(\Om)}\sum_{k=1}^K\int_0^{|z|}\sum_{|\bm j|=m-1}\f{(m-1)!}{j_1!\cdots j_K!}\f{\tau^{\bm\be\cdot\bm j-\al_1\ga}}{\Ga(\bm\be\cdot\bm j+1-\al_1\ga)}(|z|-\tau)^{\al_k(1-\ga)-1}\,\rd\tau.
\end{align*}
Moreover, reminding the definition of the Riemann-Liouville integral operator, we can rewrite the above terms as
\begin{align*}
& \quad\,\left\|(\bm u^{(m+1)}-\bm u^{(m)})(z)\right\|_{\cD(\cA^\ga)}\\
& \le C_1L_0C_3M^m\|\bm u_0\|_{L^2(\Om)}\max_{1\le k\le K}\Ga(\al_k(1-\ga))\sum_{k=1}^K J^{\al_k(1-\ga)}\left(\sum_{j=1}^K J^{\al_j(1-\ga)}\right)^{m-1}(|z|^{-\al_1\ga})\\
& =C_3M^{m+1}\|\bm u_0\|_{L^2(\Om)}\sum_{|\bm j|=m}\f{m!}{j_1!\cdots j_K!}\f{|z|^{\bm\be\cdot\bm j-\al_1\ga}}{\Ga(\bm\be\cdot\bm j+1-\al_1\ga)}
\end{align*}
in view of the definition of the constant $M$ in \eqref{eq-def-Mj}. We finish the proof of the first part of the lemma.

(ii) In the case of $\bm u_0\equiv\bm 0$, let $\ga=1$ for $p=2$ and $0\le\ga<1$ for $p\ne2$. Then the estimate \eqref{eq-est-v2} in Lemma \ref{lem-ibvp-v} directly implies
$$
\|\bm w\|_{L^p(0,T;\cD(\cA^\ga))}\le C_2\|\bm F\|_{L^p(0,T;L^2(\Om))},
$$
which is exactly \eqref{eq-est-um2} for $m=0$. For $m\in\BN$, it is readily seen from \eqref{eq-iteration} that $\bm u^{(1)}=\bm w$ and
\begin{equation}\label{eq-diff-um}
\bm u^{(m+1)}-\bm u^{(m)}=\cQ(\bm u^{(m)}-\bm u^{(m-1)})=\cdots=\cQ^m(\bm u^{(1)}-\bm u^{(0)})=\cQ^m\bm w
\end{equation}
for $m\in\BN$. Then applying Lemma \ref{lem-est-Q} to \eqref{eq-diff-um} indicates
\begin{align*}
\|(\bm u^{(m+1)}-\bm u^{(m)})(t)\|_{\cD(\cA^\ga)} & \le M^m\left(\sum_{k=1}^K J^{\al_k(1-\ga)}\right)^m\|\bm w(t)\|_{\cD(\cA^\ga)}\\
& \le M^m\sum_{|\bm j|=m}\f{m!}{j_1!\cdots j_K!}J^{\bm\be\cdot\bm j}\left(\|\bm w(t)\|_{\cD(\cA^\ga)}\right)
\end{align*}
for a.e.\! $t\in(0,T)$. Then we employ Young's convolution inequality to obtain
\begin{align*}
\|\bm u^{(m+1)}-\bm u^{(m)}\|_{L^p(0,T;\cD(\cA^\ga))} & \le M^m\|\bm w\|_{L^p(0,T;\cD(\cA^\ga))}\int_0^T\left| \sum_{|\bm j|=m}\f{m!}{j_1!\cdots j_K!}\f{t^{\bm\be\cdot\bm j-1}}{\Ga(\bm\be\cdot\bm j)} \right| \rd t\\
& \le C_2M^m\|\bm F\|_{L^p(0,T;L^2(\Om))}\sum_{|\bm j|=m}\f{m!}{j_1!\cdots j_K!}\f{T^{\bm\be\cdot\bm j}}{\Ga(\bm\be\cdot\bm j+1)}.
\end{align*}
Now Stirling's formula (e.g.\! Abramowitz and Stegun \cite[p.257]{AS72}) 
\[
\Ga(\eta)=\sqrt{2\pi}\,\e^{-\eta}\eta^{\eta-1/2}(1+O(\eta^{-1}))\quad\mbox{as }\eta\to+\infty
\]
yields
\[
\Ga(\bm\be\cdot\bm j+1)\ge\wt C\,\Ga(\un\be|\bm j|+1)=\wt C\,\Ga(\un\be m+1),\quad\un\be =\min\{\be_1,\cdots,\be_K\}
\]
with a positive constant $\wt C = \min_{y_1\ge y_2\ge 0}(\Ga(1+y_1)/\Ga(1+y_2))$.
Together with the multinomial theorem
$$
\sum_{|\bm j|=m}\f{m!}{j_1!\cdots j_K!}=K^m,
$$
it follows that
$$
\sum_{|\bm j|=m}\f{m!}{j_1!\cdots j_K!}\f{T^{\bm\be\cdot\bm j}}{\Ga(\bm\be\cdot\bm j+1)}
\le\wt C^{-1} K^m\f{\max\{1,T^{\ov\be m}\}}{\Ga(\un\be m+1)},
$$
where $\ov\be=\max\{\be_1,\dots,\be_K\}$. Collecting all the above estimates, we finally arrive at
\[
\|\bm u^{(m+1)}-\bm u^{(m)}\|_{L^p(0,T;\cD(\cA^\ga))}
\le C_2\wt C^{-1}(M K)^m\f{\max\{1,T^{\ov\be m}\}}{\Ga(\un\be m+1)} 
\|\bm F\|_{L^p(0,T;L^2(\Om))},
\]
which finishes the proof.
\end{proof}

Now we are ready to complete the proof of Theorem \ref{thm-well}. Throughout this section, by $C>0$ and $C_{T,\ga}>0$ we refer to generic constants depending only on $\Om,\bm\al,\cA,\cP$ and $\Om,\bm\al,\cA,\cP,T,\ga$, respectively, which may change from line to line. 

\begin{proof}[Proof of Theorem {\rm\ref{thm-well}(i)}]
Let $\bm u_0\in(L^2(\Om))^K$ and $\bm F\equiv\bm 0$.\medskip

\noindent{\bf Case 1 } First we assume $\cP$ is a first-order differential operator with $L^\infty$ coefficients. For clarity, we further divide the proof into three steps.\medskip

\noindent{\bf Step 1 } We claim that for $1/2\le\ga<1$, the sequence $\{\bm u^{(m)}\}$ defined by \eqref{eq-iteration} converges in $L^{1/\ga}(0,T;\cD(\cA^\ga))$, whose limit $\bm u\in L^{1/\ga}(0,T;\cD(\cA^\ga))$ is a mild solution to \eqref{eq-ibvp-u1} satisfying the estimates \eqref{eq-est-u0}--\eqref{eq-est-u1}.

Indeed, by the completeness of $\cD(\cA^\ga)$, it suffices to verify that $\{\bm u^{(m)}(z)\}$ is a Cauchy sequence in $\cD(\cA^\ga)$ in any compact subset of $S$. To this end, we pick $m,m'\in\BN$ ($m<m'$) and utilize the estimate \eqref{eq-est-um1} in Lemma \ref{lem-est-um} to estimate
\begin{align*}
\left\|(\bm u^{(m')}-\bm u^{(m)})(z)\right\|_{\cD(\cA^\ga)} & \le\sum_{\ell=m}^{m'-1}\left\|(\bm u^{(\ell+1)}-\bm u^{(\ell)})(z)\right\|_{\cD(\cA^\ga)}\\
& \le\sum_{\ell=m}^{m'-1}C_3M^{\ell+1}\|\bm u_0\|_{L^2(\Om)}\sum_{|\bm j|=\ell}\f{\ell!}{j_1!\cdots j_K!}\f{|z|^{\bm\be\cdot\bm j-\al_1\ga}}{\Ga(\bm\be\cdot\bm j+1-\al_1\ga)}\\
& \le C_3M\|\bm u_0\|_{L^2(\Om)}\,|z|^{-\al_1\ga}\sum_{\ell=m}^\infty\sum_{|\bm j|=\ell}\f{\ell!}{j_1!\cdots j_K!}\f{\prod_{k=1}^K(M\,|z|^{\be_k})^{j_k}}{\Ga(1-\al_1\ga+\bm\be\cdot\bm j)}.
\end{align*}
Now let us invoke the definition of the multinomial Mittag-Leffler function, which was first introduced by Hadid and Luchko in \cite{HL96} (see also \cite{LG99})
\[
E_{\bm\be,\be_0}(z_1,\dots,z_K):=\sum_{\ell=0}^\infty\sum_{|\bm j|=\ell}\f{\ell!}{j_1!\cdots j_K!}\f{\prod_{k=1}^K z_k^{j_k}}{\Ga(\be_0+\bm\be\cdot\bm j)}.
\]
Then we immediately see that the summation in the above estimate coincides with the remainder after the $m$-th term in the definition of $E_{\bm\be,1-\al_1\ga}(M\,|z|^{\be_1},\dots,$ $M\,|z|^{\be_K})$, which converges uniformly in any compact set of $S$. In other words, we conclude
\[
\left\|(\bm u^{(m')}-\bm u^{(m)})(z)\right\|_{\cD(\cA^\ga)}\longrightarrow0\quad\mbox{as }m,m'\to\infty\mbox{ in any compact set of }S.
\]

Thus, the sequence $\{\bm u^{(m)}(t)\}$ converges to some limit $\bm u(t)\in\cD(\cA^\ga)$ for a.e.\! $t\in(0,T)$ and $\bm u:(0,T]\longrightarrow\cD(\cA^\ga)$ is analytic. Moreover, following the argument used in the proof of \cite[Theorem 2.3]{LHY20}, we further obtain
\begin{equation}\label{eq-est-um4}
\|\bm u(t)\|_{\cD(\cA^\ga)}\le C\,t^{-\al_1\ga}\exp(\wt M t)\|\bm u_0\|_{L^2(\Om)},\quad\mbox{a.e. }t>0
\end{equation}
for $0\le \ga<1$. Therefore, it is straightforward to deduce \eqref{eq-est-u1} by
\begin{align*}
\|\bm u\|_{L^{1/\ga}(0,T;\cD(\cA^\ga))} & =\left(\int_0^T\|\bm u(t)\|_{\cD(\cA^\ga)}^{1/\ga}\,\rd t\right)^\ga\le C\exp(\wt M T)\|\bm u_0\|_{L^2(\Om)}\left(\int_0^T t^{-\al_1}\,\rd t\right)^\ga\\
& =C_{T,\ga}\|\bm u_0\|_{L^2(\Om)},\quad1/2\le\ga<1.
\end{align*}
Finally, passing $m\to\infty$ in \eqref{eq-iteration}, it is readily seen that $\bm u$ satisfies \eqref{eq-rep-u} and thus it is a mild solution to \eqref{eq-ibvp-u1} according to Definition \ref{def-mild}.

Now for $0\le\ga<1/2$, we perform $\cA^\ga$ on both sides of \eqref{eq-rep-u} and calculate directly to find
\[
\cA^\ga\bm u=\cA^\ga\cS(t)\bm u_0+\cA^\ga\cQ\bm u.
\]
In view of the estimates in Lemma \ref{lem-ibvp-v} and by using the argument in the proof of Lemma \ref{lem-est-Q}, a direct calculation yields
\begin{align*}
\|\cA^\ga\bm u(t)\|_{L^2(\Om)} & \le C_{T,\ga}\|\bm u_0\|_{L^2(\Om)}t^{-\al_1\ga}+\left\|\int_0^t\cA^{\ga-1}\cS'(t-\tau)(\cP\bm u)(\tau)\,\rd\tau\right\|_{L^2(\Om)}\\
& \le C_{T,\ga}\|\bm u_0\|_{L^2(\Om)}t^{-\al_1\ga}+\left\|\int_0^t\cA^{-1/2}\cS'(t-\tau)\cA^{\ga-1/2}(\cP\bm u)(\tau)\,\rd\tau\right\|_{L^2(\Om)}\\
& \le C_{T,\ga}\|\bm u_0\|_{L^2(\Om)}t^{-\al_1\ga}+C\sum_{k=1}^K\int_0^t (t-\tau)^{\al_k/2-1}\|u_k(\tau)\|_{\cD(\cA^{\ga})}\rd\tau\\
& \le C_{T,\ga}\|\bm u_0\|_{L^2(\Om)}t^{-\al_1\ga}+C_{T,\ga} \int_0^t (t-\tau)^{\al_K/2-1}\|\bm u(\tau)\|_{\cD(\cA^{\ga})}\rd\tau.
\end{align*}
Thus, by the generalized Gr\"onwall's inequality (e.g.\! \cite[Lemma 7.1.1]{H81}), we obtain
\begin{align*}
\|\bm u(t)\|_{\cD(\cA^\ga)} 
& \le C_{T,\ga} \|\bm u_0\|_{L^2(\Om)} t^{-\al_1\ga}+C_{T,\ga} \|\bm u_0\|_{L^2(\Om)} \int_0^t (t-\tau)^{\al_K/2-1}\tau^{-\al_1\ga} \rd\tau\\
& \le C_{T,\ga} \|\bm u_0\|_{L^2(\Om)} t^{-\al_1\ga} + C_{T,\ga} \|\bm u_0\|_{L^2(\Om)} t^{\al_K/2-\al_1\ga}.
\end{align*}
Finally, we can obtain
\[
\|\bm u\|_{L^{1/\ga}(0,T;\cD(\cA^\ga))}\le C_{T,\ga}\|\bm u_0\|_{L^2(\Om)},\quad 0\le\ga<1.
\]

\noindent{\bf Step 2 } Next we demonstrate the uniqueness of the mild solution and the convergence \eqref{eq-IC-u}.

For the uniqueness, assume that $\bm v$ is another mild solution to \eqref{eq-ibvp-u1}. Taking difference between the integral equations \eqref{eq-rep-u} for $\bm u$ and $\bm v$, we obtain
\[
\bm u-\bm v=\cQ(\bm u-\bm v).
\]
Then for $\ga\in[1/2,1)$ applying Lemma \ref{lem-est-Q} with $m=1$ immediately yields
\begin{align*}
\|(\bm u-\bm v)(t)\|_{\cD(\cA^\ga)} & \le M\sum_{k=1}^K\int_0^t\|(\bm u-\bm v)(\tau)\|_{\cD(\cA^\ga)}(t-\tau)^{\al_k(1-\ga)-1}\,\rd\tau\\
& \le C\int_0^t\|(\bm u-\bm v)(\tau)\|_{\cD(\cA^\ga)}(t-\tau)^{\al_K(1-\ga)-1}\,\rd\tau,\ 0<t<T.
\end{align*}
Thus we can employ Gr\"onwall's inequality with a weakly singular kernel (e.g.\! \cite[Lemma 7.1.1]{H81}) to conclude $\bm u\equiv\bm v$.

Now we turn to the convergence issue \eqref{eq-IC-u}. We have
\[
\bm u(t)-\bm u_0=(\bm w(t)-\bm u_0)+\cQ\bm u(t),
\]
where $\lim_{t\to0}\|\bm w(t)-\bm u_0\|_{L^2(\Om)}=0$ is guaranteed by \eqref{eq-IC-v} in Lemma \ref{lem-ibvp-v}. For $\cQ\bm u(t)$, again we apply Lemma \ref{lem-est-Q} with $m=1$ and employ 
\eqref{eq-est-um4} with $\ga=0$ to dominate
\begin{align}
\|\cQ\bm u(t)\|_{L^2(\Om)} & \le M\sum_{k=1}^K\int_0^t\|\bm u(\tau)\|_{L^2(\Om)}\f{(t-\tau)^{\al_k-1}}{\Ga(\al_k)}\,\rd\tau\nonumber\\
& \le C_{T,\ga}K\|\bm u_0\|_{L^2(\Om)}\int_0^t \exp(\wt M\,\tau)(t-\tau)^{\al_K-1}\,\rd\tau\nonumber\\
& \le C_{T,\ga}K\exp(\wt M\,t)\|\bm u_0\|_{L^2(\Om)}\int_0^t (t-\tau)^{\al_K-1}\,\rd\tau\nonumber\\
& =C_{T,\ga}K\exp(\wt M\,t)\|\bm u_0\|_{L^2(\Om)}t^{\al_K}\longrightarrow0\quad(t\to0).\label{eq-IC-y}
\end{align}
Thus \eqref{eq-IC-u} is verified.\medskip

\noindent{\bf Step 3 } Finally, we improve the $\bm x$-regularity of $\bm u$ to show $\bm u\in L^1(0,T;\cD(\cA))$ and the estimate \eqref{eq-est-u1} for $\ga=1$. Recall that now
\[
\bm u=\bm w+\bm y \quad \text{with } \bm w = \cS(\cdot) \bm u_0, \ \bm y:=\cQ\bm u.
\]
According to the estimate \eqref{eq-est-v1} in Lemma \ref{lem-ibvp-v}(i), we immediately see
\begin{align}
\|\bm w\|_{L^{1/\ga}(0,T;\cD(\cA^\ga))} & =\left(\int_0^T\|\bm w(t)\|_{\cD(\cA^\ga)}^{1/\ga}\,\rd t\right)^\ga\le C_{T,\ga}\|\bm u_0\|_{L^2(\Om)}\left(\int_0^T t^{-\al_1}\,\rd t\right)^\ga\nonumber\\
& \le C_{T,\ga}\|\bm u_0\|_{L^2(\Om)},\quad 0\le\ga\le1.\label{eq-est-w}
\end{align}
On the other hand, by the definition of $\cQ$ and Lemma \ref{lem-ibvp-v}(ii), it reveals that $\bm y$ satisfies the initial-boundary value problem
\begin{equation}\label{eq-ibvp-y}
\begin{cases}
(\pa_t^{\bm\al}+\cA)\bm y=\cP\bm u & \mbox{in }\Om\times(0,T),\\
\bm y=\bm 0 & \mbox{in }\Om\times\{0\},\\
\bm y=\bm 0 & \mbox{on }\pa\Om\times(0,T).
\end{cases}
\end{equation}
Here the initial condition is understood in the sense of \eqref{eq-IC-y}. Notice that in Step 1, we already obtained $\bm u\in L^{1/\ga}(0,T;\cD(\cA^\ga))$ along with \eqref{eq-est-u1} for $0\le\ga<1$, and in particular
\begin{gather*}
\bm u\in L^2(0,T;\cD(\cA^{1/2}))=(L^2(0,T;H_0^1(\Om)))^K,\\
\|\bm u\|_{L^2(0,T;H_0^1(\Om))}\le C_{T,\ga}\|\bm u_0\|_{L^2(\Om)}.
\end{gather*}
Then for $\ga =1$, we can employ Lemma \ref{lem-ibvp-v}(ii) with $p=2$ to derive
\begin{align*}
\|\bm y\|_{L^1(0,T;\cD(\cA))} & \le T^{1/2}\|\bm y\|_{L^2(0,T;\cD(\cA))}\le C_2T^{1/2}\|\cP\bm u\|_{L^2(0,T;L^2(\Om))}\\
& \le C_{T,\ga}\|\bm u\|_{L^2(0,T;H_0^1(\Om))}\le C_{T,\ga}\|\bm u_0\|_{L^2(\Om)}.
\end{align*}
Finally, combining the above inequality with \eqref{eq-est-w} leads to $\bm u=\bm w+\bm y\in L^1(0,T;\cD(\cA))$ and the estimate \eqref{eq-est-u1} for $\ga=1$.\medskip

\noindent{\bf Case 2 } Now we assume $\cP$ is a first-order differential operator with $W^{1,\infty}$ coefficients in $\bm x$. Indeed, the key estimate
\[
\|\cQ^m\bm v(t)\|_{\cD(\cA^\ga)}\le M^m\left(\sum_{k=1}^K J^{\al_k/2-1}\right)^m\|\bm v(t)\|_{\cD(\cA^\ga)}
\]
analogous to that in Lemma \ref{lem-est-Q} still holds true in the case of $1/2<\ga\le1$. Repeating the same argument as that in Step 1, we can easily conclude that the mild solution $\bm u$ belongs to $L^{1/\ga}(0,T;\cD(\cA^\ga))$ and satisfies \eqref{eq-est-u0} also for $0\le\ga\le1$. This completes the proof of Theorem \ref{thm-well}(i).
\end{proof}

\begin{proof}[Proof of Theorem {\rm\ref{thm-well}(ii)}]
Let $\bm u_0\equiv\bm 0$ and $\bm F\in(L^p(0,T;L^2(\Om)))^K$ with $p\in[1,+\infty]$. Throughout this proof, we restrict $0\le\ga<1$ for $p\ne2$ and allow $\ga=1$ for $p=2$.

We argue rather similarly as the previous proof and we claim that the sequence $\{\bm u^{(m)}\}$ defined by \eqref{eq-iteration} converges in $L^p(0,T;\cD(\cA^\ga))$, whose limit $\bm u\in L^p(0,T;\cD(\cA^\ga))$ is the unique mild solution to \eqref{eq-ibvp-u1} satisfying the estimate \eqref{eq-est-u2}.

Indeed, we pick $m,m'\in\BN$ ($m<m'$) and turn to the estimate \eqref{eq-est-um2} in Lemma \ref{lem-est-um} to estimate
\begin{align*}
\|\bm u^{(m')}-\bm u^{(m)}\|_{L^p(0,T;\cD(\cA^\ga))} & \le\sum_{\ell=m}^{m'-1}\|\bm u^{(\ell+1)}-\bm u^{(\ell)}\|_{L^p(0,T;\cD(\cA^\ga))}\\
& \le\|\bm F\|_{L^p(0,T;L^2(\Om))}\sum_{\ell=m}^{m'-1}C(M K)^\ell\f{\max\{1,T^{\ov\be\ell}\}}{\Ga(\un\be\ell+1)}\\
& \le C\|\bm F\|_{L^p(0,T;L^2(\Om))}\sum_{\ell=m}^\infty\f{(M K\max\{1,T^{\ov\be}\})^\ell}{\Ga(\un\be\ell+1)}.
\end{align*}
Again noticing that the above summation coincides with the remainder after the $m$-th term in the Mittag-Leffler function $E_{\un\be,1}(M K\max\{1,T^{\ov\be}\})$, we can easily conclude
\[
\|\bm u^{(m')}-\bm u^{(m)}\|_{L^p(0,T;\cD(\cA^\ga))}\longrightarrow0\quad\mbox{as }m,m'\to\infty
\]
due to the uniform convergence of $E_{\un\be,1}(M K\max\{1,T^{\ov\be}\})$. Therefore, $\{\bm u^{(m)}\}$ is a Cauchy sequence in $L^p(0,T;\cD(\cA^\ga))$ and thus converges to some limit $\bm u\in L^p(0,T;\cD(\cA^\ga))$, which is obviously a mild solution to \eqref{eq-ibvp-u1}. Moreover, similarly we deduce
\begin{align*}
\|\bm u\|_{L^p(0,T;\cD(\cA^\ga))} & \le\sum_{\ell=0}^\infty\|\bm u^{(\ell+1)}-\bm u^{(\ell)}\|_{L^p(0,T;\cD(\cA^\ga))}\\
& \le C\|\bm F\|_{L^p(0,T;L^2(\Om))}\sum_{\ell=0}^\infty\f{(M K\max\{1,T^{\ov\be}\})^\ell}{\Ga(\un\be\ell+1)}\\
& =C\|\bm F\|_{L^p(0,T;L^2(\Om))}E_{\un\be,1}(M K\max\{1,T^{\ov\be}\})\\
& \le C_{T,\ga}\|\bm F\|_{L^p(0,T;L^2(\Om))},
\end{align*}
where we used Lemma \ref{lem-ML} again to treat $E_{\un\be,1}(M K\max\{1,T^{\ov\be}\})$. This completes the proof of \eqref{eq-est-u2}. The uniqueness of $\bm u$ follows from identically the same argument as the proof of Theorem \ref{thm-well}(i) and this completes the proof of Theorem \ref{thm-well}(ii).
\end{proof}

\section{Proof of Theorem \ref{thm-asymp}}\label{sec-asymp}

Throughout this section, we assume \eqref{eq-weak} and consider the long-time asymptotic behavior of the solution to the following initial-boundary value problem
\begin{equation}\label{equ-uv-infty}
\begin{cases}
\pa_t^{\bm\al}\bm u+\cA\bm u=\bm C\bm u & \mbox{in }\Om\times\BR_+,\\
\bm u=\bm u_0 & \mbox{in }\Om\times\{0\},\\
\bm u=\bm0 & \mbox{on }\pa\Om\times\BR_+,
\end{cases}
\end{equation}
where we recall $\bm C=(c_{k\ell})_{1\le k,\ell\le K}\in(L^\infty(\Om))^{K\times K}$. Based on the results in the above section, we know that the solution $\bm u$ to \eqref{eq-ibvp-u1} uniquely exists in $H_0^1(\Om)\cap H^{2\ga}(\Om)$ for any $t\in(0,T]$ and admits the estimate
\begin{equation}\label{esti-H2}
\|\bm u(t)\|_{\cD(A^\ga)}\le C_T\,t^{-\al_1\ga}\|\bm u_0\|_{L^2(\Om)},\quad0<t\le T
\end{equation}
with $\ga\in[0,1)$. Thus the asymptotic behavior as $t\to0$ is related to the largest order of the fractional derivatives. However, we know nothing about the long-time asymptotic behavior from the above estimate except that the constant in \eqref{esti-H2} depends on $T$, which may tends to infinity as $T\to+\infty$. Therefore, we shall employ the Laplace transform
\[
\wh f(s):=\int_{\BR_+}f(t)\,\e^{-s t}\,\rd t
\]
to investigate the long-time asymptotic behavior of the solution to \eqref{equ-uv-infty}.

Before giving the proof of Theorem \ref{thm-asymp}, we make some necessary settings. We introduce the sector
$$
S_\te:=\{s\in\BC\setminus\{0\}\mid|\arg s|<\te\}
$$
with an angel $\te\in(\f\pi2,\min\{\f\pi{2\al_1},\pi\})$. We apply the Laplace transform to \eqref{equ-uv-infty} and use the formula
$$
\wh{\pa_t^\al f}(s)=s^\al\wh f(s)-s^{\al-1}f(0),\quad0<\al<1
$$
to derive the boundary value problem for a coupled elliptic system
\begin{equation}\label{equ-lap-uv}
\begin{cases}
(\cA+s^{\bm\al}-\bm C)\wh{\bm u}(\,\cdot\,;s)=s^{-1}s^{\bm\al}\bm u_0 & \mbox{in }\Om,\\
\wh{\bm u}(\,\cdot\,;s)=\bm0 & \mbox{on }\pa\Om
\end{cases}
\end{equation}
with a parameter $s\in\{\rRe\,s>s_0\}$. Here we abbreviate
\[
s^{\bm\al}:=\diag(s^{\al_1},\dots,s^{\al_K})
\]
and $s_0\ge C$ with the constant $C$ in \eqref{eq-est-u0}. 
As before, we simply write $\wh{\bm u}(\,\cdot\,;s)$ as $\wh{\bm u}(s)$.

We check at once that $\wh{\bm u}:\{\rRe\,s>s_0\}\longrightarrow(L^2(\Om))^K$ is analytic, which is clear from the property of the Laplace transform. Moreover, we claim that $\wh{\bm u}(s)$ ($\rRe\,s>s_0$) can be analytically extended to the sector $S_\te$. Namely, the following lemma holds.

\begin{lem}\label{lem-analy-lap-uv}
Under the same assumptions in Theorem $\ref{thm-asymp},$ for any $s\in S_\te$ the boundary value problem \eqref{equ-lap-uv} admits a unique weak solution $\wh{\bm u}(s)\in(H^2(\Om))^K$ and $\wh{\bm u}:S_\te\longrightarrow(H^2(\Om))^K$ is analytic. Moreover$,$ there exists a constant $C'>0$ depending only on $d,\Om,\te,\bm\al,\bm C$ and $\cA$ such that
\begin{equation}\label{esti-lap-uv}
\|\wh{\bm u}(s)\|_{H^2(\Om)}\le C'\|\bm u_0\|_{L^2(\Om)}\left(\sum_{k,\ell=1}^K|s|^{\al_k+\al_\ell-1}+\sum_{k=1}^K|s|^{\al_k-1}\right),\quad\forall\,s\in S_\te. 
\end{equation}
\end{lem}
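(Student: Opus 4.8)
The plan is to reduce the coupled elliptic system \eqref{equ-lap-uv} to a fixed-point problem by treating the coupling matrix $\bm C$ as a perturbation, exactly paralleling the Picard-iteration strategy used for the time-domain problem. First I would establish well-posedness and the key a priori estimate for the \emph{diagonal} operator $\cA+s^{\bm\al}$ (i.e.\! the decoupled case $\bm C=\bm 0$). For each component, the resolvent $(\cA_k+s^{\al_k})^{-1}$ acts on $L^2(\Om)$, and using the eigenbasis expansion $\wh u_k(s)=\sum_n \frac{s^{\al_k-1}(u_0^{(k)},\vp_n^{(k)})}{\la_n^{(k)}+s^{\al_k}}\vp_n^{(k)}$ together with the sectorial condition $|\arg s|<\te<\pi/(2\al_1)$, one controls $|\la_n^{(k)}+s^{\al_k}|$ from below by $c(\la_n^{(k)}+|s|^{\al_k})$. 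This yields the diagonal estimate
\[
\|(\cA_k+s^{\al_k})^{-1}g\|_{\cD(\cA_k)}\le C\|g\|_{L^2(\Om)},
\]
and applied to $g=s^{\al_k-1}u_0^{(k)}$ it gives the single-term bound $\|(\cA_k+s^{\al_k})^{-1}(s^{\al_k-1}u_0^{(k)})\|_{H^2(\Om)}\le C|s|^{\al_k-1}\|u_0^{(k)}\|_{L^2(\Om)}$, matching the second sum in \eqref{esti-lap-uv}.

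Next I would set up the fixed-point equation. Rewriting \eqref{equ-lap-uv} as $(\cA+s^{\bm\al})\wh{\bm u}=s^{-1}s^{\bm\al}\bm u_0+\bm C\wh{\bm u}$ and inverting the diagonal part, one obtains $\wh{\bm u}=\bm w_0(s)+\cR(s)\wh{\bm u}$, where $\bm w_0(s):=(\cA+s^{\bm\al})^{-1}(s^{-1}s^{\bm\al}\bm u_0)$ and $\cR(s):=(\cA+s^{\bm\al})^{-1}\bm C$. Since $\bm C\in(L^\infty(\Om))^{K\times K}$ maps $L^2(\Om)^K$ boundedly into itself with norm independent of $s$, and the diagonal inverse is bounded on $L^2(\Om)$ with operator norm $O(|s|^{-\un\al})$ where $\un\al=\al_K$, the map $\cR(s)$ is a contraction on $(L^2(\Om))^K$ for $|s|$ large. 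For general $s\in S_\te$ one instead sums the Neumann series $\wh{\bm u}=\sum_{m=0}^\infty \cR(s)^m\bm w_0(s)$; each application of $\cR(s)$ introduces one factor of $\bm C$ and one diagonal resolvent, so the $m$-th term is bounded by $(C|s|^{-\un\al})^m$ times $\|\bm w_0(s)\|$, giving geometric convergence once $|s|$ exceeds a threshold depending on $\|\bm C\|_{L^\infty}$, $\te$ and $\cA$. Analyticity of $\wh{\bm u}$ on $S_\te$ follows because each $\cR(s)^m\bm w_0(s)$ is analytic (the diagonal resolvent is analytic in $s$ away from the spectrum, which the sector avoids) and the series converges locally uniformly.

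For the quantitative bound \eqref{esti-lap-uv}, I would track the powers of $|s|$ through one application of $\cR(s)$. Hitting $\bm w_0(s)$ (which carries factors $|s|^{\al_\ell-1}$ componentwise) with $\bm C$ preserves the $L^2$ size, and then the diagonal $H^2$-resolvent $(\cA_k+s^{\al_k})^{-1}$ applied \emph{without} a gain contributes the extra factor $|s|^{\al_k-1}$ relative to the $L^2$ estimate—this is precisely how the product $|s|^{\al_k+\al_\ell-1}$ in the first sum of \eqref{esti-lap-uv} arises from the leading $m=1$ correction, while the $m=0$ term $\bm w_0(s)$ produces the $|s|^{\al_k-1}$ sum. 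The higher-order terms $m\ge2$ carry additional negative powers $|s|^{-\un\al(m-1)}$ and are absorbed into the constant $C'$ for $|s|$ bounded away from zero; on the remaining compact-in-$S_\te$ region the estimate holds trivially by continuity. \textbf{The main obstacle} I anticipate is the uniform lower bound on $|\la_n^{(k)}+s^{\al_k}|$ and the corresponding $H^2$-resolvent estimate on the \emph{full} sector $S_\te$: one must verify that the choice $\te<\pi/(2\al_1)$ keeps $s^{\al_k}$ away from the negative real axis for \emph{every} $k$ (the constraint is tightest for $\al_1$, the largest order, which is why $\al_1$ enters the admissible angle), so that the decoupled resolvent is sectorial with constants uniform in $s$ and $n$; getting the correct power-counting in \eqref{esti-lap-uv} rather than a cruder bound then hinges on separating the $\cD(\cA_k)$-boundedness of the resolvent from the $|s|^{\al_k-1}$ scaling of the data.
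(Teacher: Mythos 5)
Your overall architecture (decoupled resolvent estimates via eigenfunction expansion, then handle $\bm C$ as a perturbation by Neumann series) differs from the paper, which instead treats the full operator $\cA-\bm C+s^{\bm\al}$ at once via the Lax--Milgram theorem and then bootstraps to $H^2$ by elliptic regularity. The difficulty is that your route has a genuine gap precisely where the lemma is actually needed. The Neumann series $\sum_m\cR(s)^m\bm w_0(s)$ with $\cR(s)=(\cA+s^{\bm\al})^{-1}\bm C$ converges only when $\|\cR(s)\|<1$; your own estimate gives $\|\cR(s)\|\lesssim\min\{\|\bm C\|_{L^\infty}/\la_1,\ |s|^{-\un\al}\}$, so smallness is available only for $|s|$ large. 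For $|s|$ small or moderate you offer no argument at all for existence, uniqueness, or the bound: the set $\{s\in S_\te:|s|\le R\}$ is \emph{not} compact in $S_\te$ (it accumulates at $0$), ``continuity'' cannot produce a bound on a region where solvability has not been established, and in any case the solution genuinely blows up like $|s|^{\al_k-1}$ as $s\to0$ because the data $s^{-1}s^{\bm\al}\bm u_0$ does --- this singular rate is exactly what the subsequent contour integration through the origin uses to extract the $t^{-\al_K}$ decay, so it cannot be waved away. Without a smallness or sign condition, $\cA-\bm C+s^{\bm\al}$ could in principle fail to be injective for some small $s$.

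The missing ingredient is the hypothesis of Theorem \ref{thm-asymp} that $\bm C$ is negative semi-definite, which your proposal never invokes. The paper uses it to prove coercivity of the sesquilinear form
$B[\bm\psi,\bm\psi;s]=\int_\Om(\sum_k\bm A_k\nb\psi_k\cdot\nb\psi_k+(s^{\bm\al}-\bm C)\bm\psi\cdot\bm\psi)\,\rd\bm x$
uniformly over \emph{all} $s\in S_\te$ (the terms $-\bm C\bm\psi\cdot\bm\psi$ and $\rRe\,s^{\al_k}=r^{\al_k}\cos(\al_k\rho)>0$ are both nonnegative, so coercivity comes from the ellipticity of $\cA_k$ alone), whence Lax--Milgram gives unique solvability and the $H^1$ bound $\|\wh{\bm u}(s)\|_{H^1}\le C\sum_k|s|^{\al_k-1}\|u_0^{(k)}\|_{L^2}$ for every $s\in S_\te$; the cross terms $|s|^{\al_k+\al_\ell-1}$ in \eqref{esti-lap-uv} then come from feeding $s^{\bm\al}\wh{\bm u}(s)$ back through the $H^2$ elliptic regularity estimate, not from a first-order Neumann correction. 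Your sectorial lower bound $|\la_n^{(k)}+s^{\al_k}|\ge c(\la_n^{(k)}+|s|^{\al_k})$ and the resulting diagonal estimates are fine as far as they go, but to repair the argument you would either have to exploit the sign of $\bm C$ (at which point you have essentially reconstructed the paper's coercivity proof) or find some other mechanism ruling out spectrum of $\cA-\bm C+s^{\bm\al}$ at $0$ for small $s$.
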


\begin{proof}
Firstly for any fixed $s\in S_\te$, we define a bilinear form
\[
B[\,\cdot\,,\,\cdot\,;s]:(H_0^1(\Om))^K\times(H_0^1(\Om))^K\longrightarrow\BC
\]
by
$$
B[\bm\psi,\bm\vp;s] :=\int_\Om\left(\sum_{k=1}^K\bm A_k\nb\psi_k\cdot\nb\vp_k+(s^{\bm\al}-\bm C)\bm\psi\cdot\bm\vp\right)\rd\bm x,
$$
where $\bm\psi=(\psi_1,\psi_2,\cdots,\psi_K)^\T,\bm\vp=(\vp_1,\vp_2,\cdots,\vp_K)^\T\in(H_0^1(\Om))^K$. 
By using the Poincar\'e inequality and H\"older's inequality, we have
\[
|B[\bm\psi,\bm\vp;s]|\le C(s)\int_\Om\left(\sum_{k=1}^K|\nb\psi_k||\nb\vp_k|+|\bm\psi||\bm\vp|\right)\rd\bm x\le C(s)\|\bm\psi\|_{H^1(\Om)}\|\bm\vp\|_{H^1(\Om)}.
\]
Here the constant $C(s)>0$ depends on $s$. Furthermore, taking $\bm\vp=\bm\psi$ implies
$$
B[\bm\psi,\bm\psi;s]=\int_\Om\left(\sum_{k=1}^K\bm A_k\nb\psi_k\cdot\nb\psi_k+(s^{\bm\al}-\bm C)\bm\psi\cdot\bm\psi\right)\rd\bm x,
$$
and hence
\begin{align*}
\rRe(B[\bm\psi,\bm\psi;s]) & =\int_\Om\left\{\sum_{k=1}^K\left(\bm A_k\nb(\rRe\,\psi_k)\cdot\nb(\rRe\,\psi_k)+\bm A_k\nb(\rIm\,\psi_k)\!\cdot\!\nb(\rIm\,\psi_k)+(\rRe\,s^{\al_k})|\psi_k|^2\right)\right.\\
& \qquad\quad\,\,-\bm C(\rRe\,\bm\psi)\cdot(\rRe\,\bm\psi)-\bm C(\rIm\,\bm\psi)\cdot(\rIm\,\bm\psi)\bigg\}\,\rd\bm x.
\end{align*}
Since the matrix $\bm C$ is assumed to be negative semi-definite, it follows that 
$$
-\bm C(\rRe\,\bm\psi)\cdot(\rRe\,\bm\psi)\ge0,\quad-\bm C(\rIm\,\bm\psi)\cdot(\rIm\,\bm\psi)\ge0\quad\mbox{in }\Om.
$$
Further noticing that $\rRe\,s^{\al_k}=r^{\al_k}\cos\al_k\rho>0$ ($k=1,2,\cdots,K$) for $s=r\,\e^{\ri\rho}\in S_\te$ with $\te\in(\f\pi2,\min\{\f\pi{2\al_1},\pi\})$, we employ the above inequality and the ellipticity of $\cA_k$ to deduce
$$
\rRe(B[\bm\psi,\bm\psi;s])\ge \kappa \int_\Om\sum_{k=1}^K\left(|\nb(\rRe\,\psi_k)|^2+|\nb(\rIm\,\psi_k)|^2\right)\rd\bm x\ge C_0\|\bm\psi\|_{H^1(\Om)}^2
$$
for some positive constant $C_0$, where we used the Poincar\'e inequality in the last inequality. Consequently, the Lax-Milgram theorem yields that for any $s\in S_\te$, there exists a unique $\wh{\bm u}(s)\in(H_0^1(\Om))^K$ such that
$$
B[\wh{\bm u}(s),\bm\vp;s]=s^{-1}\int_\Om s^{\bm\al}\bm u_0\cdot\bm\vp\,\rd\bm x,\quad\forall\,\bm\vp\in(H_0^1(\Om))^K,
$$
which implies
\[
C_0\|\wh{\bm u}(s)\|_{H^1(\Om)}^2\le|B[\wh{\bm u}(s),\wh{\bm u}(s);s]|\le \sum_{k=1}^K|s|^{\al_k-1}\|u_0^{(k)}\|_{L^2(\Om)}\|\wh{\bm u}(s)\|_{H^1(\Om)}
\]
in view of H\"older's inequality, and consequently
\begin{equation}\label{esti-u-H1}
\|\wh{\bm u}(s)\|_{H^1(\Om)}\le C'\sum_{k=1}^K|s|^{\al_k-1}\|u_0^{(k)}\|_{L^2(\Om)},\quad s\in S_\te.
\end{equation}
Furthermore, since $\bm u_0\in(L^2(\Om))^K$, then by the regularity estimate for elliptic equations (e.g.\! \cite{E98}), we see that $\wh{\bm u}(s)\in(H^2(\Om))^K$ with the estimate
\begin{align*}
\|\wh{\bm u}(s)\|_{H^2(\Om)} & \le C'\left\|(s^{\bm\al}-\bm C)\wh{\bm u}(s)-s^{-1}s^{\bm\al}\bm u_0\right\|_{L^2(\Om)}\\
& \le C'\sum_{k=1}^K|s|^{\al_k}\|\wh{u_k}(s)\|_{L^2(\Om)}+C'\|\bm C\wh{\bm u}(s)\|_{L^2(\Om)}+C'\sum_{k=1}^K|s|^{\al_k-1}\|u_0^{(k)}\|_{L^2(\Om)}\\
& \le C'\|\bm u_0\|_{L^2(\Om)}\left(\sum_{k,\ell=1}^K|s|^{\al_k+\al_\ell-1}+\sum_{k=1}^K|s|^{\al_k-1}\right),\quad s\in S_\te.
\end{align*}
Similar to the argument used in the proof of \cite[Theorem 0.1]{Pruss}, we can prove that the solution $\wh{\bm u}(s)$ is analytic in $s\in S_\te$. The proof of Lemma \ref{lem-analy-lap-uv} is completed.
\end{proof}

Now we can proceed to complete the proof of Theorem \ref{thm-asymp}. From Lemma \ref{lem-analy-lap-uv}, we see that there exists a unique analytical extension of $\wh{\bm u}(s)$ from $\{\rRe\,s>s_0\}$ to $S_\te$. By the same notation we denote this extension.

By Fourier-Mellin formula (e.g.\! \cite{S91}) for the inverse Laplace transform, we have
$$
\bm u(t)=\f1{2\pi\ri} \int_{s_0-\ri\infty}^{s_0+\ri\infty} \wh{\bm u}(s)\,\e^{s t}\,\rd s.
$$
Since $\wh{\bm u}(s)$ is analytic in the sector $S_\te$, it follows from the residue theorem (e.g.\! \cite{R86}) that for $t>0$, $\bm u(t)$ can be represented by an integral on the contour
\[
\ga(\ve,\te):=\{s\in\BC\mid\arg s=\te,\ |s|\ge\ve\}\cup\{s\in\BC\mid|\arg s|\le\te,\ |s|=\ve\}
\]
with $\ve>0$. In other words, there holds
$$
\bm u(t)=\f1{2\pi\ri}\int_{\ga(\ve,\te)}\wh{\bm u}(s)\,\e^{s t}\,\rd s,
$$
where the shift in the line of integration is justified by the estimate \eqref{esti-lap-uv}. 
Moreover, again from the estimate \eqref{esti-lap-uv}, we can pass $\ve\to0$ to obtain
\[
\bm u(t)=\f1{2\pi\ri}\int_{\ga(0,\te)}\wh{\bm u}(s)\,\e^{s t}\,\rd s.
\]
Then we conclude from \eqref{esti-lap-uv} that
\begin{align*}
\|\bm u(t)\|_{H^2(\Om)} & \le C\int_{\BR_+}\left(\|\wh{\bm u}(r\,\e^{\ri\te})\|_{H^2(\Om)}+\|\wh{\bm u}(r\,\e^{-\ri\te})\|_{H^2(\Om)}\right)\e^{r t\cos\te}\,\rd r\\
& \le C\|\bm u_0\|_{L^2(\Om)}\int_{\BR_+}\e^{r t\cos\te}\left(\sum_{k,\ell=1}^K r^{\al_k+\al_\ell-1}+\sum_{k=1}^K r^{\al_k-1}\right)\,\rd r
\end{align*}
and thus
$$
\|\bm u(t)\|_{H^2(\Om)}\le C\|\bm u_0\|_{L^2(\Om)}\left(\sum_{k,\ell=1}^K t^{-\al_k-\al_\ell}+\sum_{k=1}^K t^{-\al_k}\right),\quad t>0
$$
in view of $\cos\te<0$ since $\f\pi2 <\te<\min\{\f\pi{2\al_1},\pi\}$. 
Moreover, in particular we obtain
$$
\|\bm u(t)\|_{H^2(\Om)}\le C\,t^{-\al_K}\|\bm u_0\|_{L^2(\Om)},\quad t\ge t_0
$$
for any fixed $t_0>0$. Here $C>0$ is a new generic constant depending also on $t_0$.

Finally, let us turn to proving the sharpness of the decay rate $t^{-\al_K}$ by contradiction. If the decay rate $t^{-\al_K}$ is not sharp, then there exists a nonnegative and bounded function $f$ in $[0,+\infty)$ satisfying $\lim_{t\to+\infty}f(t)=0$ and
$$
\|\bm u(t)\|_{H^2(\Om)}\le C\,t^{-\al_K}f(t)\|\bm u_0\|_{L^2(\Om)},\quad t>0.
$$
Then we see that the Laplace transform $\wh{\bm u}(s)$ admits the following estimate
$$
\|\wh{\bm u}(s)\|_{H^2(\Om)}\le C\,g(s) s^{\al_K-1}\|\bm u_0\|_{L^2(\Om)},\quad 0<s<1,
$$
where 
$$
g(s):=s^{1-\al_K}\wh{(t^{-\al_K}f)}(s) = s^{1-\al_K}\int_0^\infty t^{-\al_K}f(t)\,\e^{-s t}\,\rd t,\quad s\ge 0.
$$
Moreover, we claim that 
\begin{equation}\label{eq-g}
g\mbox{ is bounded on $[0,1]$ and }\lim_{s\to0}g(s)=0, 
\end{equation}
which will be verified in Appendix \ref{sec-app}. Then immediately we see that 
$$
s^{1-\al_K}\cA\wh{\bm u}(s)\longrightarrow\bm 0\quad\mbox{as }s\to0.
$$
Now multiplying $s^{1-\al_K}$ on both sides of \eqref{equ-lap-uv} and passing $s\to0$ imply
$$
\bm0=\lim_{s\to0}s^{-\al_K}s^{\bm\al}\bm u_0\quad\mbox{in }\Om
$$
and especially $u_0^{(K)}\equiv 0$ in $\Om$, which is a contradiction. This completes the proof of Theorem \ref{thm-asymp}.

\section{Proof of Theorem \ref{thm-ip}}\label{sec-alpha}

In this section, we prove the unique determination of the orders by the observation of $u_{k_0}$ at $\{\bm x_0\}\times(0,T)$. 
By noting the time analyticity of the solution to \eqref{eq-ibvp-u1}, we can uniquely extend the solution $\bm u$ from $(0,T)$ to $\BR_+$. Thus, by applying the Laplace transform on both sides of the equation \eqref{eq-ibvp-u1} with $\bm\al=(\al_1,\dots,\al_K)^{\T}$ and $\bm\be=(\be_1,\dots,\be_K)^{\T}$, respectively and recalling the notation $s^{\bm\al}=\diag(s^{\al_1},\dots,s^{\al_K})$, we find
\[
\begin{cases}
(\cA-\bm C+s^{\bm\al})\wh{\bm u}(s)=s^{-1}s^{\bm\al}\bm u_0 & \mbox{in }\Om,\\
\wh{\bm u}(s)=\bm0 & \mbox{on }\pa\Om,
\end{cases}\quad\begin{cases}
(\cA-\bm C+s^{\bm\be})\wh{\bm v}(s)=s^{-1}s^{\bm\be}\bm u_0 & \mbox{in }\Om,\\
\wh{\bm v}(s)=\bm0 & \mbox{on }\pa\Om
\end{cases}
\]
for $s>0$. By taking the difference between the above problems, it turns out that $(\wh{\bm u}-\wh{\bm v})(s)$ satisfies
\begin{equation}\label{equ-v}
\begin{cases}
(\cA-\bm C+s^{\bm\al})(\wh{\bm u}-\wh{\bm v})(s)=s^{-1}(s^{\bm\al}-s^{\bm\be}) (\bm u_0-s\,\wh{\bm v}(s)) & \mbox{in }\Om,\\
(\wh{\bm u}-\wh{\bm v})(s)=\bm0 & \mbox{on }\pa\Om.
\end{cases}
\end{equation}
We prove by contradiction. 
Let us assume that $\bm\al\ne\bm\be$. Then there exists an index $k_1\in\{1,\dots,K\}$ such that
$$
\al_{k_1}\ne\be_{k_1},\quad\al_k=\be_k,\quad k>k_1.
$$ 
In other words, $\al_{k_1}$ is the smallest order in $\bm\al$ such that $\al_k\ne\be_k$. Without loss of generality, we assume $\al_{k_1}<\be_{k_1}$. 

Moreover, we introduce the following auxiliary function $\bm w$ defined by 
$$
\bm w(s):=\f{s\,(\wh{\bm u}-\wh{\bm v})(s)}{s^{\al_{k_1}}-s^{\be_{k_1}}},\quad s>0,
$$
which shares the same sign as that of $(\wh{\bm u}-\wh{\bm v})(s)$ for each component. Multiplying both sides of the governing equation in \eqref{equ-v} by $s/(s^{\al_{k_1}}-s^{\be_{k_1}})$, we see that $\bm w$ satisfies the boundary value problem
\begin{equation}\label{equ-w}
\begin{cases}
(\cA-\bm C+s^{\bm\al})\bm w(s)=\bm\Si(\bm u_0-s\,\wh{\bm v}(s)) & \mbox{in }\Om,\\
\bm w(s)=\bm0 & \mbox{on }\pa\Om,
\end{cases}
\end{equation}
where
\[
\bm\Si:=\diag\left(\f{s^{\al_1}-s^{\be_1}}{s^{\al_{k_1}}-s^{\be_{k_1}}},\dots,\f{s^{\al_{k_1-1}}-s^{\be_{k_1-1}}}{s^{\al_{k_1}}-s^{\be_{k_1}}},1,0,\dots,0\right).
\]
From the property of the Laplace transform, it follows that $\wh{\bm u}(s)$ and $\wh{\bm v}(s)$ are analytic with respect to $s>0$, so that $\bm w(s)$ is continuous for $s>0$ in view of its definition.

Next we discuss the limit of $\bm w(s)$ as $s\to0$. We claim that 
$$
\lim_{s\to0}\bm w(s)=\bm w_0\quad\mbox{in }(H^2(\Om))^K,
$$
where $\bm w_0=(w_0^{(1)},\dots,w_0^{(K)})^\T$ solves the following boundary value problem:
\begin{equation}\label{equ-w0}
\begin{cases}
(\cA-\bm C)\bm w_0=\bm D\bm u_0 & \mbox{in }\Om,\\
\bm w_0=\bm 0 & \mbox{on }\pa\Om.
\end{cases}
\end{equation}
Here $\bm D=(d_{k\ell})_{1\le k,\ell\le K}$ is a diagonal matrix with
$$
d_{k k}=\begin{cases}
1, & (k\in I_1:=\{k<k_1\mid\al_k=\al_{k_1}\})\mbox{ or }(k=k_1),\\
0, & (k\in I_2:=\{k<k_1\mid\al_k\ne\al_{k_1}\})\mbox{ or }(k>k_1).
\end{cases}
$$
In fact, letting $\bm z(s):=\bm w(s)-\bm w_0$ ($s>0$), we take the difference between \eqref{equ-w} and \eqref{equ-w0} to obtain
\begin{equation}\label{equ-z}
\left\{\!\begin{alignedat}{2}
& (\cA-\bm C+s^{\bm\al})\bm z(s) = \bm H(s)  & \quad & \mbox{in }\Om,\\
&\bm z(s)=\bm0 & \quad & \mbox{on }\pa\Om
\end{alignedat}\right.
\end{equation}
where
\begin{equation}\label{eq-def-H}
\bm H(s):=(\bm\Si-\bm D)\bm u_0-s\bm\Si\wh{\bm v}(s)-s^{\bm\al}\bm w_0.
\end{equation}
By the conditions \eqref{eq-cond-C1}--\eqref{eq-cond-C2} in Theorem \ref{thm-ip}, we can apply the maximum principle for coupled elliptic equations (e.g.\! \cite{FM90}) to conclude that the operator $\mathcal{A}-\bm C+s^{\bm\alpha}$ is invertible for any $s>0$. Thus, the regularity estimate for elliptic equations (e.g.\! \cite{E98}) implies
\begin{align*}
\|\bm z(s)\|_{H^2(\Om)}\le C\|\bm H(s)\|_{L^2(\Om)}. 
\end{align*}
Here the generic constant $C>0$ also depends on the $L^\infty$-norm of the coefficients in \eqref{equ-z}. However, henceforth we consider only small $s$ (e.g.\! $0<s\le 1$) so that the constant $C$ is independent of $s$. It remains to check that $\|\bm H(s)\|_{L^2(\Om)}$ vanishes as $s\to0$. By noting the definition of the diagonal matrix $\bm D$, we find that the $k$-th entry of the diagonal matrix $\bm\Si-\bm D$ in \eqref{eq-def-H} reads
\[
\left\{\!\begin{alignedat}{2}
& \f{s^{\al_k}-s^{\be_k}}{s^{\al_{k_1}}-s^{\be_{k_1}}}-1=\f{s^{\be_{k_1}}-s^{\be_k}}{s^{\al_{k_1}}-s^{\be_{k_1}}}=\f{s^{\be_{k_1}-\al_{k_1}}-s^{\be_k-\al_{k_1}}}{1-s^{\be_{k_1}-\al_{k_1}}}, & \quad & k\in I_1,\\
& \f{s^{\al_k}-s^{\be_k}}{s^{\al_{k_1}}-s^{\be_{k_1}}}=\f{s^{\al_k-\al_{k_1}}-s^{\be_k-\al_{k_1}}}{1-s^{\be_{k_1}-\al_{k_1}}}, & \quad & k\in I_2,\\
& 0, & \quad & k\ge k_1.
\end{alignedat}\right.
\]
Then the norm of the first term in $\bm H(s)$ admits 
\[
\|(\bm\Si-\bm D)\bm u_0\|_{L^2(\Om)}\le\|\bm u_0\|_{L^2(\Om)}\left(\sum_{k\in I_1}\f{s^{\be_{k_1}-\al_{k_1}}-s^{\be_k-\al_{k_1}}}{1-s^{\be_{k_1}-\al_{k_1}}}+\sum_{k\in I_2}\f{s^{\al_k-\al_{k_1}}-s^{\be_k-\al_{k_1}}}{1-s^{\be_{k_1}-\al_{k_1}}}\right).
\]
Moreover, in view of the definition of $\bm w$, 
we estimate the norm of the second term in $\bm H(s)$ as follows
\begin{align*}
\|-s\bm\Si\wh{\bm v}(s)\|_{L^2(\Om)} & \le\|s\wh{\bm v}(s)\|_{L^2(\Om)}\sum_{k=1}^{k_1}\f{s^{\al_k-\al_{k_1}}-s^{\be_k-\al_{k_1}}}{1-s^{\be_{k_1}-\al_{k_1}}}\\
& \le C\|\bm u_0\|_{L^2(\Om)}\sum_{\ell=1}^K s^{\al_\ell}\sum_{k=1}^{k_1}\f{s^{\al_k-\al_{k_1}}-s^{\be_k-\al_{k_1}}}{1-s^{\be_{k_1}-\al_{k_1}}}.
\end{align*}
Here in the last inequality we used the estimate \eqref{esti-u-H1} for $\wh{\bm v}$. 
Noting that $\bm w_0$ is the solution to \eqref{equ-w0}, again by the regularity estimate for elliptic equations, we readily find that 
$$
\left\|-s^{\bm\al}\bm w_0\right\|_{L^2(\Om)}\le C\|\bm u_0\|_{L^2(\Om)}\sum_{k=1}^K s^{\al_k}. 
$$
Combining the above estimates and noticing the fact that
$$
\al_{k_1}<\be_{k_1}\le\be_k,\ k\in I_1\cup I_2, \quad\al_{k_1}<\al_k,\ k\in I_2,
$$
we obtain $\lim_{s\to0}\|\bm H(s)\|_{L^2(\Om)}= 0$ and hence $\lim_{s\to0}\|\bm z(s)\|_{H^2(\Om)}= 0$.

From Lemma \ref{lem-smp} in Appendix \ref{sec-app}, we conclude that $\bm w_0>\bm0$ in $\Om$, that is, $w_0^{(k)}>0$ in $\Om$ for all $k=1,\dots,K$. Then at the observation point $\bm x_0\in\Om$, from the relation $\bm w_0=\lim_{s\to0}\bm w(s)$ in $(H^2(\Om))^K\subset(C(\ov\Om))^K$, we have
$$
\lim_{s\to0}w_k(\bm x_0;s)=w_0^{(k)}(\bm x_0)>0,\quad k=1,\dots,K.
$$
This indicates that we can choose a small $\ve>0$ such that $w_k(\bm x_0;s)>0$ for any $s\in(0,\ve)$ and $k=1,\dots,K$, which implies that 
$$
\wh u_k(\bm x_0;s)-\wh v_k(\bm x_0;s)>0,\quad 0<s<\ve,\ k=1,\dots,K.
$$
Therefore, we have $\wh{\bm u}(\bm x_0;s)>\wh{\bm v}(\bm x_0;s)$ for all $s\in (0,\ve)$, that is, 
$$
\wh u_k(\bm x_0;s)>\wh v_k(\bm x_0;s),\quad0<s<\ve,\ k=1,\dots,K.
$$
This yields a contradiction since $u_{k_0}=v_{k_0}$ at $\{\bm x_0\}\times(0,T)$ implies $u_{k_0}=v_{k_0}$ at $\{\bm x_0\}\times\BR_+$ by the time analyticity, and hence $\wh{u_{k_0}}(\bm x_0;s)=\wh{v_{k_0}}(\bm x_0;s)$ for any $s>0$. The proof of Theorem \ref{thm-ip} is completed.

\section{Conclusions}\label{sec-rem}

In this paper, we investigate the fundamental properties of the initial-boundary value problem \eqref{eq-ibvp-u0} for a moderately coupled system of linear time-fractional diffusion equations. In view of the unique existence, limited smoothing property and long-time asymptotic behavior, it reveals that the coupled system inherits almost the same properties as those of a single equation, especially those of a multi-term time-fractional diffusion equation. Employing the coupling effect, we also established the uniqueness for an inverse problem on the simultaneous determination of multiple parameters by the observation of a single component of the solution. These indicate the similarity and difference between a single equation and a coupled system of time-fractional PDEs.

Parallel to the case of single equations, several topics can be enumerated as future topics. For instance, in order to construct the super/subsolution methods for time-fractional reaction-diffusion systems, it is essential to develop the comparison principle or, if possible, even the strong maximum principle similarly to that of traditional ones. Regarding the unique continuation property, it worths considering the possibility of assuming the vanishing of only a part of components.

It is an interesting question to ask what happens when we allow one or several orders in \eqref{eq-ibvp-u0} to be exactly 1. Actually one can readily check from Section \ref{sec-alpha} that we can choose some of the orders as 1 in the part of inverse problem. Although one has to reconsider some estimates in the part of forward problem, we conjecture that the results proved in this article still hold true even though the largest order equals to 1.  

We close this article with mentioning another direction of generalizing the orders of time derivatives to $(1,2)$, i.e., coupled time-fractional wave systems modeling viscoelastic wave propagation (e.g.\! \cite{BDES18,KR22}). In these models, the orders $\al_k$ can be the same, whereas the systems should be strongly coupled up to the second spatial derivatives for elasticity. Regardless of their difficulty, it is important to understand the mechanism between viscosity and elasticity.

\appendix
\section{Technical details}\label{sec-app}

\subsection{Strong maximum principle}

We consider a strong maximum principle for the following weakly coupled elliptic system
\begin{equation}\label{equ-w1}
\begin{cases}
(\cA -\bm C)\bm w=\bm F & \mbox{in }\Om,\\
\bm w=\bm0 & \mbox{on }\pa\Om,
\end{cases}
\end{equation}
where $\cA$ is the elliptic operator defined in Section \ref{sect-intro}, $\bm C=(c_{k\ell})_{1\le k,\ell\le K}\in (L^\infty(\Om))^{K\times K}$ is a matrix-valued function and $\bm F=(f_1,\dots,f_K)^{\T}$ is a vector-valued function. Then we have the following strong maximum principle. 

\begin{lem}[Strong maximum principle]\label{lem-smp}
Assume that $\bm C$ satisfies the conditions {\rm\eqref{eq-cond-C1}--\eqref{eq-cond-C2}} in Theorem $\ref{thm-ip}$ and $\bm F\ge \bm 0,\not\equiv\bm 0$ in $\Om,$ that is$,$ $f_k\ge 0$ in $\Om$ for $k=1,\dots,K$ and there exists $k_0\in \{1,\dots,K\}$ such that $f_{k_0}\ge 0,\not\equiv 0$ in $\Om$. Then the solution to \eqref{equ-w1} satisfies $\bm w>\bm 0$ in $\Om,$ that is$,$ $w_k>0$ in $\Om$ for all $k=1,\dots,K$.  
\end{lem}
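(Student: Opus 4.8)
The plan is to prove Lemma \ref{lem-smp} in two stages: first establish the \emph{nonnegativity} $\bm w\ge\bm 0$, and then upgrade it to the \emph{strict positivity} $\bm w>\bm 0$ by means of the scalar strong maximum principle together with the full-coupling hypothesis \eqref{eq-cond-C1}. Throughout I would use that $d\le 3$ forces $w_k\in H^2(\Om)\subset C(\ov\Om)$, so that all pointwise statements make sense, and I would treat $\Om$ as connected so that the scalar strong maximum principle yields global conclusions. The cooperative structure \eqref{eq-cond-C1} (nonnegative off-diagonal entries) and the row-sum condition \eqref{eq-cond-C2} are precisely the classical sufficient conditions for maximum principles for weakly coupled systems, so as an alternative one may simply invoke \cite{FM90,S89}; below I sketch a self-contained route that avoids negative semi-definiteness of $\bm C$.

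For the nonnegativity I would argue by contradiction through a \emph{global vectorial minimum}. Set $m:=\min_{1\le k\le K}\min_{\ov\Om}w_k$ and suppose $m<0$. Since every $w_k$ vanishes on $\pa\Om$, the value $m$ is attained at some interior point $\bm x_*\in\Om$ by some component $w_{k_*}$. Note first that \eqref{eq-cond-C1}--\eqref{eq-cond-C2} force $c_{k_*k_*}\le-\sum_{\ell\ne k_*}c_{k_*\ell}\le0$, so the scalar operator $L:=\cA_{k_*}-c_{k_*k_*}$ has a nonnegative zeroth-order coefficient. Introducing the shifted function $v:=w_{k_*}-m\ge0$, which has an interior zero at $\bm x_*$, a direct computation gives $Lv=f_{k_*}+\sum_{\ell\ne k_*}c_{k_*\ell}w_\ell+c_{k_*k_*}m$ in $\Om$. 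Here the crucial point is that $f_{k_*}\ge0$, that $w_\ell\ge m$ for every $\ell$, and that $c_{k_*\ell}\ge0$ for $\ell\ne k_*$, whence $Lv\ge m\sum_{\ell=1}^K c_{k_*\ell}\ge0$, because the full row sum is nonpositive while $m<0$. Thus $v$ is a nonnegative supersolution of $L$ attaining a non-positive interior minimum, and the scalar strong maximum principle forces $v\equiv0$, i.e.\ $w_{k_*}\equiv m$; this contradicts $w_{k_*}|_{\pa\Om}=0$, and hence $\bm w\ge\bm 0$.

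Granting $\bm w\ge\bm 0$, the strict positivity is then straightforward. Each component solves $(\cA_k-c_{kk})w_k=g_k$ with $g_k:=f_k+\sum_{\ell\ne k}c_{k\ell}w_\ell\ge0$, again with nonnegative potential $-c_{kk}$, so the scalar strong maximum principle yields the dichotomy $w_k\equiv0$ or $w_k>0$ in $\Om$ for each $k$. Since $f_{k_0}\ge,\not\equiv0$ gives $g_{k_0}\ge,\not\equiv0$, we must have $w_{k_0}>0$ in $\Om$; and then for every $k\ne k_0$ the full-coupling bound $g_k\ge c_{kk_0}w_{k_0}\ge,\not\equiv0$ (using exactly $c_{kk_0}\ge,\not\equiv0$ from \eqref{eq-cond-C1} together with $w_{k_0}>0$) rules out $w_k\equiv0$, forcing $w_k>0$. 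This propagation from the single ``activated'' component $k_0$ to all the others is where the stronger hypothesis \eqref{eq-cond-C1} (rather than merely \eqref{eq-cond-C1'}) is indispensable. I expect the nonnegativity stage to be the main obstacle: the coupling obstructs a naive componentwise scalar argument, and the device that makes it work is reducing to a \emph{scalar} supersolution at the global minimum, using cooperativity and the row-sum condition in tandem to keep the shifted right-hand side nonnegative.
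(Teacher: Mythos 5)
Your proposal is correct, and its second stage (upgrading $\bm w\ge\bm 0$ to $\bm w>\bm 0$) coincides with the paper's argument: both pass to the componentwise equations $(\cA_k-c_{kk})w_k=g_k$ with $g_k:=f_k+\sum_{\ell\ne k}c_{k\ell}w_\ell$, use $c_{kk}\le0$ and the scalar strong maximum principle to get $w_{k_0}>0$ from $g_{k_0}\ge,\not\equiv0$, and then propagate positivity to every other component via $c_{kk_0}w_{k_0}\ge,\not\equiv0$, which is exactly where \eqref{eq-cond-C1} rather than \eqref{eq-cond-C1'} is needed. The difference lies in the first stage: the paper simply invokes \cite[Theorem 1.1]{FM90} to obtain $\bm w\ge\bm 0$, whereas you give a self-contained vectorial minimum argument, shifting the extremal component by the putative negative global minimum $m$ and checking that the cooperativity \eqref{eq-cond-C1} together with the row-sum condition \eqref{eq-cond-C2} keeps $Lv\ge m\sum_\ell c_{k_*\ell}\ge0$, so that the scalar strong maximum principle forces $v\equiv0$ and contradicts the boundary data. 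I verified the computation $Lv=f_{k_*}+\sum_{\ell\ne k_*}c_{k_*\ell}w_\ell+c_{k_*k_*}m$ and the ensuing sign estimates; they are correct (note that the divergence-form strong maximum principle for $H^1$ supersolutions, e.g.\ \cite[Theorem 8.19]{GT01}, is the right tool here since $H^2(\Om)\not\subset W^{2,d}_{\mathrm{loc}}$ for $d=3$, and connectedness of $\Om$ is needed as you say). Your route buys independence from the hypotheses and formulation of the cited maximum principle at the cost of a slightly longer argument; the paper's route is shorter but leans on \cite{FM90}. Both are valid.
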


We refer to \cite{MS95,S89} and the references therein for some results on the (strong) maximum principle for the weakly coupled elliptic systems where the settings on the coupling matrix $\bm C$ are different. For the completeness, we apply the maximum principle in \cite{FM90} and a strong maximum principle for a single elliptic equation to prove the above lemma. 

\begin{proof}
Such a strong maximum principle was also mentioned in \cite[Remark 1.7]{FM90}, but the statement is not clear because the assumption on the coupling coefficients was not correctly written. Thus, here we aim at giving a concise proof by using the maximum principle \cite[Theorem 1.1]{FM90} and a strong maximum principle (e.g.\! \cite[Theorem 4, Chapter 6]{E98}, \cite[Theorem 8.19]{GT01}). 

Since we assume the coupling matrix $\bm C$ satisfies the conditions \eqref{eq-cond-C1}--\eqref{eq-cond-C2} in Theorem \ref{thm-ip} and $\bm F\ge \bm 0$ in $\Om$, we can apply Theorem 1.1 in \cite{FM90} to obtain $\bm w\ge \bm 0$ in $\Om$, that is, $w_k\ge 0$ in $\Om$ for $k=1,\dots,K$. Moreover, the assumption $\bm F\ge\bm 0,\not\equiv\bm 0$ implies that there exists an index $k_0\in\{1,\dots,K\}$ such that $f_{k_0}\ge 0,\not\equiv 0$ in $\Om$. Then we pick up the $k_0$-th equation in \eqref{equ-w1}, which reads
$$
(\cA_{k_0}-c_{k_0k_0})w_{k_0}=f_{k_0}+\sum_{\substack{\ell=1,\dots,K\\
\ell\ne k_0}}c_{k_0\ell}w_\ell=:g.
$$
According to the conditions \eqref{eq-cond-C1}--\eqref{eq-cond-C2}, we have $c_{k_0k_0}\le0$ and $c_{k_0\ell}\ge0$ for $\ell\ne k_0$, and hence $g\ge 0,\not\equiv 0$. Therefore, we can employ a strong maximum principle for a single elliptic equation to conclude that $w_{k_0}>0$ in $\Om$. It remains to prove the other components of $\bm w$ are also positive. To this end, we rewrite the equations \eqref{equ-w1} in each component as follows
\begin{equation}\label{eq-comp}
(\cA_k-c_{k k})w_k=f_k+\sum_{\substack{\ell=1,\dots,K\\
\ell\ne k}}c_{k\ell}w_\ell=:g_k,\quad k=1,\dots,K.
\end{equation}
For each $k\ne k_0$, by the condition \eqref{eq-cond-C1} and $\bm w\ge \bm 0$, we have 
$$
f_k+\sum_{\substack{\ell=1,\dots,K\\
\ell\ne k,k_0}}c_{k\ell}w_\ell\ge0. 
$$
On the other hand, the condition \eqref{eq-cond-C1} reads $c_{k k_0}\ge 0,\not\equiv 0$, which, together with $w_{k_0}>0$ in $\Om$, implies $c_{k k_0}w_{k_0}\ge 0,\not\equiv 0$. Therefore, we obtain $g_k\ge 0,\not\equiv 0$ for any $k\in \{1,\dots,K\}\setminus\{k_0\}$. This completes the proof by applying the strong maximum principle (e.g.\! \cite[Theorem 4, Chapter 6]{E98}) for \eqref{eq-comp}.
\end{proof}

\begin{rem}
{\rm According to the proof, we readily find that one can prove another strong maximum principle by releasing the condition \eqref{eq-cond-C1} (to include the decoupled case) and assume stronger assumption on $\bm F$. That is, we have}
\end{rem}

\begin{lem}[Another strong maximum principle]\label{lem-asmp}
Assume that $\bm C$ satisfies the conditions \eqref{eq-cond-C1'} in Remark $\ref{rem-ip2},$ \eqref{eq-cond-C2} in Theorem $\ref{thm-ip}$ and $f_k\ge 0,\not\equiv 0$ in $\Om$ for all $k\in\{1,\dots,K\}$. Then the solution to \eqref{equ-w1} satisfies $\bm w>\bm 0$ in $\Om,$ that is$,$ $w_k>0$ in $\Om$ for all $k=1,\dots,K$.  
\end{lem}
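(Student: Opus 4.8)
The plan is to follow the same strategy as the proof of Lemma \ref{lem-smp}, the only structural change being that the stronger hypothesis $f_k\ge,\not\equiv0$ for \emph{every} $k$ renders the coupling condition \eqref{eq-cond-C1} unnecessary. First I would establish the weak maximum principle $\bm w\ge\bm0$ in $\Om$. Since $\bm C$ satisfies the classical conditions \eqref{eq-cond-C1'} and \eqref{eq-cond-C2} and $\bm F\ge\bm0$, the maximum principle \cite[Theorem 1.1]{FM90} applies verbatim (it requires only the nonnegativity of the off-diagonal entries and \eqref{eq-cond-C2}, not \eqref{eq-cond-C1}) and yields $w_k\ge0$ in $\Om$ for all $k=1,\ldots,K$.

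Next I would verify that each diagonal entry is nonpositive, which is exactly the sign needed to invoke the single-equation strong maximum principle on each component. Indeed, combining \eqref{eq-cond-C2} with the nonnegativity \eqref{eq-cond-C1'} of the off-diagonal entries gives, for each $k$,
\[
c_{kk}\le-\sum_{\substack{\ell=1,\ldots,K\\\ell\ne k}}c_{k\ell}\le0\quad\mbox{on }\ov\Om.
\]
I would then decompose \eqref{equ-w1} into its scalar equations
\[
(\cA_k-c_{kk})w_k=f_k+\sum_{\substack{\ell=1,\ldots,K\\\ell\ne k}}c_{k\ell}w_\ell=:g_k,\quad k=1,\ldots,K,
\]
so that the zeroth-order coefficient $-c_{kk}\ge0$ has the correct sign. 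Using $f_k\ge,\not\equiv0$ together with $c_{k\ell}\ge0$ and $w_\ell\ge0$ from the first step, each right-hand side satisfies $g_k\ge f_k\ge,\not\equiv0$ in $\Om$. Applying the strong maximum principle for a single elliptic equation (e.g.\! \cite[Theorem 4, Chapter 6]{E98}, \cite[Theorem 8.19]{GT01}) to the nonnegative solution $w_k$ with nontrivial nonnegative source $g_k$ and homogeneous Dirichlet data then forces $w_k>0$ throughout $\Om$, for every $k$.

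The argument is essentially mechanical once the first step is in place, and there is no genuine obstacle beyond checking the sign of $c_{kk}$ and the nontriviality of each $g_k$. The only point worth emphasizing—and the precise reason the weaker coupling hypothesis \eqref{eq-cond-C1'} suffices here—is the following contrast with Lemma \ref{lem-smp}: there, positivity had to be propagated from the single component $w_{k_0}$ (the only one with $f_{k_0}\not\equiv0$) to the remaining components through the off-diagonal terms $c_{kk_0}\ge,\not\equiv0$, which forced the use of \eqref{eq-cond-C1}. In the present setting every $f_k$ is already nontrivial, so each $g_k$ is nontrivial on its own and no such propagation through the coupling is needed; in particular the fully decoupled case $c_{k\ell}\equiv0$ for $k\ne\ell$ is admissible.
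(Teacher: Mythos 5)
Your proposal is correct and follows exactly the route the paper intends: the paper gives no separate proof of Lemma \ref{lem-asmp} but derives it as a remark from the proof of Lemma \ref{lem-smp}, namely apply \cite[Theorem 1.1]{FM90} (which needs only \eqref{eq-cond-C1'} and \eqref{eq-cond-C2}) to get $\bm w\ge\bm 0$, then run the scalar strong maximum principle on each component, where now every $g_k\ge f_k\ge,\not\equiv0$ without any propagation through the coupling. Your observation that \eqref{eq-cond-C2} together with \eqref{eq-cond-C1'} forces $c_{kk}\le0$ is precisely the sign check the paper relies on implicitly.
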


\subsection{Proof of \eqref{eq-g}}

We will prove $g(s)$ is bounded on $[0,1]$ and satisfies $\lim_{s\to0}g(s)=0$.

\begin{proof}
Recall that
$$
g(s)=s^{1-\al_K}\int_0^\infty t^{-\al_K}f(t)\,\e^{-s t}\,\rd t.
$$
By direct calculation, it is not difficult to prove the boundedness of the above defined function $g(s)$ on $[0,1]$. It remains to show $\lim_{s\to0}g(s)=0$. For this, in view of the fact that $\lim_{t\to+\infty}f(t)=0$, it follows that for any $\ve>0$, there exists $N=N(\ve)>0$ such that $f(t)\le\ve$ for any $t>N$. Therefore, we break the integral in the definition of $g(s)$ into two parts: $[0,N]$ and $(N,\infty)$, we arrive at 
$$
g(s)=s^{1-\al_K}\int_0^N t^{-\al_K}f(t)\,\e^{-s t}\,\rd t+s^{1-\al_K}\int_N^\infty t^{-\al_K}f(t)\,\e^{-s t}\,\rd t=:I_1+I_2.
$$
We give estimates for $I_1$ and $I_2$ separately. For $I_2$, we see that
$$
I_2\le\ve\,s^{1-\al_K}\!\!\int_N^\infty t^{-\al_K}\e^{-s t}\,\rd t\le\ve\,s^{1-\al_K}\!\!\int_0^\infty t^{-\al_K}\e^{-s t}\,\rd t\le\ve\,s^{1-\al_K}s^{\al_K-1}=\ve.
$$
For $I_1$, we choose sufficiently small $\de>0$ so that $\de<\f1N \ve^{\f1{1-\al_K}}$, and we conclude from the boundedness of the function $f(t)$ that
$$
I_1\le C\,s^{1-\al_K}\int_0^N t^{-\al_K}\,\e^{-s t}\,\rd t\le C\,s^{1-\al_K}\f{N^{1-\al_K}}{1-\al_K}
\le\f{C\,\ve}{1-\al_K},\quad \forall\,s<\de.
$$
Collecting all the above estimates, we finally obtain for any $\ve>0$, there exists $\de<\f1N\ve^{\f1{1-\al_K}}$ such that
$$
g(s)\le C\,\ve,\quad\forall\,s<\de,
$$
that is $\lim_{s\to0}g(s)=0$. We finish the proof of \eqref{eq-g}.
\end{proof}

\section*{Acknowledgment}

The authors thank the anonymous referees for their valuable comments.
Z.\! Li is supported by the National Natural Science Foundation of China (NSFC nos.\! 12271277, 11801326).
X.\! Huang is supported by Grant-in-Aid for JSPS Fellows 20F20319 and a JSPS Postdoctoral Fellowship for Overseas Researchers, Japan Society for the Promotion of Science (JSPS).
Y.\! Liu is supported by Grant-in-Aid for Early Career Scientists 20K14355 and 22K13954, JSPS.


\end{document}